\newtheorem{thm}{Theorem}
\newtheorem{remark}{Remark}
\newtheorem{lemma}{Lemma}
\newtheorem{cor}{Corollary}
\newtheorem{proposition}{Proposition}
\theoremstyle{definition}
\newcommand{\eee}{{\rm e}}
\newcommand{\dod}{\overset{{\rm d}}{\to}}
\newcommand{\me}{\mathbb{E}}
\newcommand{\mn}{\mathbb{N}}
\newcommand{\mmp}{\mathbb{P}}
\def\1{\mathbbm{1}}
\begin{document}

\title{Limit theorems for random Dirichlet series with summation over primes, with an application to Rademacher random multiplicative functions}\date{}
\author{Congzao Dong\footnote{School of Mathematics and Statistics, Xidian University, Xi’an, China; e-mail address: czdong@xidian.edu.cn} \ \ and \ \ Alexander Iksanov\footnote{Faculty of Computer Science and Cybernetics, Taras Shevchenko National University of Kyiv, Ukraine; e-mail address:
iksan@univ.kiev.ua}}
\maketitle
\begin{abstract}
\noindent It is shown that two conjectures put forward in the recent article Iksanov and Kostohryz (2025) are true. Namely, we prove a functional central limit theorem (FCLT) and a law of the iterated logarithm (LIL) for a random Dirichlet series $\sum_p \frac{\eta_p}{p^{1/2+s}}$ as $s\to 0+$, where $\eta_1$, $\eta_2,\ldots$ are independent identically distributed random variables with zero mean and finite variance, and $\sum_p$ denotes the summation over the prime numbers. As a consequence, we obtain an FCLT and an LIL for $\log \sum_{n\geq 1} \frac{f(n)}{n^{1/2+s}}$ as $s\to 0+$, where $f$ is a Rademacher random multiplicative function.
\end{abstract}

\noindent Key words: functional limit theorem; law of the iterated logarithm; Rademacher random multiplicative function; random Dirichlet series

\noindent 2000 Mathematics Subject Classification: Primary: 60F15, 60F17 \\
\hphantom{2000 Mathematics Subject Classification: } Secondary: 11N37; 60G50

\section{Introduction and main results}

Let $\eta_1$, $\eta_2,\ldots$ be independent copies of a random variable $\eta$ with zero mean and finite variance, which live on a probability space $(\Omega, \mathfrak{F}, \mmp)$. We present in this article limit theorems for a random Dirichlet series $X(s):=\sum_p \frac{\eta_p}{p^{1/2+s}}$ as $s\to 0+$. Here and hereafter, $\sum_p$ denotes summation over the set $\mathcal{P}$ of prime numbers. By Kolmogorov's three series theorem, for each $s>0$, the series defining $X(s)$ converges almost surely (a.s.) and absolutely diverges a.s.

Let $\alpha\geq -1/2$. Functional central limit theorems (FCLTs) and laws of the iterated logarithm (LILs) for a random Dirichlet series $\sum_{k\geq 2}\frac{(\log k)^\alpha}{k^{1/2+s}}\eta_k$ have attracted some attention in the recent past, see \cite{Aymone+Frometa+Misturini:2020} for an LIL in the case $\alpha=0$ and $\mmp\{\eta=\pm 1\}=1/2$, \cite{Buraczewskietal:2023} and \cite{Iksanov+Kostohryz:2025} for an FCLT and an LIL in the cases $\alpha>-1/2$ and $\alpha=-1/2$, respectively. Theorem 4.1 in \cite{Zhao+Huang:2024} is an LIL-like result under the assumption that the distribution of $\eta$ is symmetric $\gamma$-stable for $\gamma\in (0,2]$.

Here are our main results. We start with an FCLT on $C[0,\infty)$ the space of real-valued continuous functions defined on $[0,\infty)$. It is assumed that $C[0,\infty)$ is equipped with the topology of locally uniform convergence. As usual, $\Longrightarrow$ denotes weak convergence in a function space.
\begin{thm}\label{con:clt}
Assume that $\me[\eta]=0$ and $\sigma^2=\me [\eta^2]\in (0,\infty)$. Then $$\Big(\frac{1}{(\log 1/s)^{1/2}}\sum_p\frac{\eta_p}{p^{1/2+s^t}}\Big)_{t\geq 0}~\Longrightarrow~ (\sigma B(t))_{t\geq 0},\quad s\to 0+$$ on $C[0,\infty)$, where $(B(t))_{t\geq 0}$ is a standard Brownian motion.
\end{thm}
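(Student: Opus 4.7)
\noindent\emph{Proof plan.} The theorem will be established via the standard two-step scheme for FCLTs on $C[0,\infty)$: (i) convergence of finite-dimensional distributions to those of $(\sigma B(t))_{t\ge 0}$, and (ii) tightness of the laws of the process $Y_s(t) := (\log 1/s)^{-1/2}\sum_p \eta_p p^{-1/2-s^t}$ on $C[0,T]$ for every $T>0$; since the locally uniform topology on $C[0,\infty)$ is generated by the seminorms $\sup_{[0,T]}|\cdot|$, this suffices. A key analytic input throughout is the Prime Number Theorem in the Mertens-type form $\sum_p p^{-1-\varepsilon}=\log(1/\varepsilon)+O(1)$ as $\varepsilon\to 0+$, obtained from $\pi(x)\sim x/\log x$ by Abel summation.

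For the fidi convergence, by the Cram\'er--Wold device it suffices to prove a one-dimensional CLT for an arbitrary linear combination $\sum_i a_i Y_s(t_i) = (\log 1/s)^{-1/2}\sum_p \eta_p b_p(s)$, where $b_p(s):=\sum_i a_i p^{-1/2-s^{t_i}}$, a sum of independent centred random variables indexed by $p$. The PNT estimate gives, for $0\le t_i\le t_j$,
\[
\frac{\sigma^2}{\log(1/s)}\sum_p p^{-1-s^{t_i}-s^{t_j}} \;=\; \frac{\sigma^2\log(1/(s^{t_i}+s^{t_j}))+O(1)}{\log(1/s)} \;\longrightarrow\; \sigma^2\min(t_i,t_j),
\]
so the total variance converges to the Brownian covariance $\sigma^2\sum_{i,j}a_ia_j\min(t_i,t_j)$. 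The Lindeberg--Feller condition is verified using $\me[\eta^2]<\infty$ together with $\max_p|b_p(s)|/(\log 1/s)^{1/2}\to 0$.

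For tightness, the same PNT input yields, with $\delta_{12}:=(t_2-t_1)\log(1/s)/2$,
\[
\mathrm{Var}\bigl(Y_s(t_2)-Y_s(t_1)\bigr) \;=\; \sigma^2\cdot\frac{2\log\cosh(\delta_{12})+O(1)}{\log(1/s)},
\]
which is $\sim\sigma^2|t_2-t_1|$ on macroscopic scales ($\delta_{12}\gg 1$), $\sim\sigma^2(t_2-t_1)^2\log(1/s)/4$ on microscopic scales ($\delta_{12}\ll 1$), and in both regimes bounded by $C|t_2-t_1|$ uniformly on $[0,T]$. To upgrade this into a moment bound suitable for Kolmogorov--Chentsov, I would truncate $\eta$ at level $M$, writing $\eta=\eta^{(M)}+\tilde\eta^{(M)}$ (centred truncation plus remainder) and correspondingly $Y_s=Y_s^{(M)}+R_s^{(M)}$. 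For the bounded piece, Rosenthal's fourth-moment inequality combined with the variance estimate above and the crude bound $\sum_p w_p^4\le(\max_p w_p^2)(\sum_p w_p^2)$ (applied with $w_p=p^{-1/2-s^{t_2}}-p^{-1/2-s^{t_1}}$) yields $\me[|Y_s^{(M)}(t_2)-Y_s^{(M)}(t_1)|^4]\le C_M|t_2-t_1|^{1+\beta}$ for some $\beta>0$, uniformly in $s$ small, giving tightness of $\{Y_s^{(M)}\}_s$ for each fixed $M$. The remainder satisfies $\mathrm{Var}(R_s^{(M)}(t))\le\sigma_M^2 t$ with $\sigma_M^2:=\mathrm{Var}(\tilde\eta^{(M)})\to 0$ as $M\to\infty$, and a L\'evy--Ottaviani-type maximal inequality, exploiting independence across primes, controls $\sup_{t\in[0,T]}|R_s^{(M)}(t)|$ in probability, uniformly in $s$. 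Letting $M\to\infty$ completes the tightness argument. The principal obstacle is precisely this step: with only $\me[\eta^2]<\infty$, higher moments are unavailable without truncation, and since $t\mapsto X(s^t)$ is not a martingale the maximal inequality for $R_s^{(M)}$ must be adapted to the particular independence structure across primes; the two-scale behaviour of the variance of increments (transition at $|t_2-t_1|\sim 1/\log(1/s)$) further demands a careful case split in the Kolmogorov--Chentsov moment bound.
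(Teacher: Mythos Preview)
Your finite-dimensional argument (Cram\'er--Wold, the Mertens asymptotic for the covariance, Lindeberg--Feller) is correct and coincides with the paper's. For tightness you take a genuinely different route: the paper passes to the equivalent parametrisation $s\to+\infty$, discards the primes $p\le\lfloor s\rfloor$ via a Donsker-type bound, truncates $\eta_p$ at the $p$-\emph{dependent} growing level $(p/\log p)^{1/2}(\log\lfloor s\rfloor)^{1/2}$, and then runs a dyadic chaining with exponential moment bounds (recycling the LIL machinery). Your fixed-$M$ truncation plus fourth-moment Kolmogorov--Chentsov for $Y_s^{(M)}$ is a reasonable alternative, and that part does go through: the two-scale estimate you describe can be made to yield $\me\bigl|Y_s^{(M)}(t_2)-Y_s^{(M)}(t_1)\bigr|^4\le C_M|t_2-t_1|^2$ uniformly in small $s$.

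The real gap is the remainder control. You invoke a ``L\'evy--Ottaviani-type maximal inequality'' for $\sup_{t\in[0,T]}|R_s^{(M)}(t)|$, but L\'evy--Ottaviani bounds maxima of partial sums $\max_k\bigl|\sum_{i\le k}\xi_i\bigr|$; here $t\mapsto R_s^{(M)}(t)$ is not a partial-sum or martingale process in any filtration---every prime contributes at every $t$, with $t$-dependent weights. Independence across primes by itself does not convert the $L^2$ increment bound $\me|R_s^{(M)}(t_2)-R_s^{(M)}(t_1)|^2\le C\sigma_M^2|t_2-t_1|$ into a supremum bound, and under a bare second-moment hypothesis no standard chaining or maximal inequality supplies one. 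You rightly flag this as the principal obstacle but do not resolve it. The paper's device is exactly what sidesteps the problem: with truncation at $(p/\log p)^{1/2}(\log\lfloor s\rfloor)^{1/2}$ restricted to $p>\lfloor s\rfloor$, the remainder is \emph{identically zero} almost surely for all large $s$ (since $(p_k/\log p_k)^{-1/2}|\eta_{p_k}|\to 0$ a.s.), so no maximal inequality for it is ever needed, while the surviving bounded piece carries the exponential moments that drive the chaining.
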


Paul Bourgade has kindly informed us that Theorem 1.1 in \cite{Vassaux:2025} proves that $$\Big(\frac{1}{(\log\log T)^{1/2}}\log\zeta(1/2+(\log T)^{-t}+{\rm i}\tau_T)\Big)_{t\in [0,1]}~\Longrightarrow~ (B^\ast(t))_{t\in [0,1]},\quad T\to\infty$$ on the space of complex-valued continuous functions defined on $[0,1]$, where $\tau_T$ is a random variable with the (continuous) uniform distribution on $[0,T]$, $\zeta$ is the Riemann zeta function, and $(B^\ast(t))_{t\in [0,1]}$ is a standard complex Brownian motion. To see a link with our Theorem \ref{con:clt}, observe that replacing $s$ with $1/\log T$ the result of Theorem \ref{con:clt} can be written as $$\Big(\frac{1}{(\log\log T)^{1/2}}\sum_p\frac{\eta_p}{p^{1/2+(\log T)^{-t}}}\Big)_{t\geq 0}~\Longrightarrow~ (\sigma B(t))_{t\geq 0},\quad T\to\infty$$ on $C[0,\infty)$. Further, finite-dimensional distributions of $\big(\log\zeta(1/2+(\log T)^{-t}+{\rm i}\tau_T)\big)_{t\in [0,1]}$ are well approximated by those of $\big(\sum_{p\leq T}p^{-1/2-(\log T)^{-t}-{\rm i}\tau_T})\big)_{t\in [0,1]}$, see p.~5 in \cite{Vassaux:2025}. It would be very interesting to understand whether this resemblance is merely coincidental or reflects a genuine connection. 

We proceed with an LIL. For a family $(x_s)$ of real numbers denote by $C((x_s))$ the set of its limit points as $s\to 0+$.
\begin{thm}\label{con:lil}
Assume that $\me[\eta]=0$ and $\sigma^2=\me[\eta^2]\in(0,\infty)$. Then
\begin{equation}\label{eq:limitpoints}
C\Big(\Big(\Big(\frac{1}{2\sigma^2\log 1/s\, \log \log\log 1/s}\Big)^{1/2}\sum_p\frac{\eta_p}{p^{1/2+s}}: s\in(0, \eee^{-\eee})\Big)\Big)=[-1,1] \quad\text{\rm a.s.}
\end{equation}
In particular,
\begin{equation}\label{eq:limsup}
\limsup_{s\to 0+}\Big(\frac{1}{\log 1/s\, \log\log\log 1/s}\Big)^{1/2}\sum_p\frac{\eta_p}{p^{1/2+s}}=\sqrt{2}\sigma 
\quad\text{\rm a.s.}
\end{equation}
and
\begin{equation}\label{eq:liminf}
\liminf_{s\to 0+}\Big(\frac{1}{\log 1/s\, \log\log\log 1/s}\Big)^{1/2}\sum_p\frac{\eta_p }{p^{1/2+s}}=-\sqrt{2}\sigma 
\quad\text{\rm a.s.}
\end{equation}
\end{thm}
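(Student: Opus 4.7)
The normalisation in Theorem~\ref{con:lil} is the classical Hartman--Wintner scale. Indeed, Mertens' theorem (equivalently $\log\zeta(1+2s)=\log(1/s)+O(1)$ as $s\to 0+$) yields
\[
\mathrm{Var}(X(s))=\sigma^2\sum_p p^{-(1+2s)}=\sigma^2\log(1/s)+O(1),
\]
so $2\mathrm{Var}(X(s))\log\log\mathrm{Var}(X(s))\sim 2\sigma^2\log(1/s)\log\log\log(1/s)$, which is precisely the LIL denominator in Theorem~\ref{con:lil}. My plan is to prove \eqref{eq:limsup} first; \eqref{eq:liminf} then follows by replacing $\eta$ with $-\eta$, and \eqref{eq:limitpoints} follows from \eqref{eq:limsup}--\eqref{eq:liminf} together with almost sure continuity of $s\mapsto X(s)$ on $(0,\eee^{-\eee})$ and the intermediate value theorem: a continuous function on $(0,\eee^{-\eee})$ whose normalised limsup and liminf at $0+$ equal $+1$ and $-1$ respectively must attain every value of $[-1,1]$ along a subsequence tending to~$0$.

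For the upper bound, set $\varphi(s):=(\log(1/s)\log\log\log(1/s))^{1/2}$ and discretise along $s_n:=\exp(-\theta^n)$ with $\theta>1$. The key estimate is the Gaussian-type tail
\[
\mmp\bigl\{X(s_n)>(1+\varepsilon)\sqrt 2\,\sigma\varphi(s_n)\bigr\}\le C_\varepsilon\,n^{-(1+\varepsilon)^2},
\]
proved by truncating each $\eta_p$ at the level $u_n\asymp(\log(1/s_n)/\log\log\log(1/s_n))^{1/2}$ and applying Bernstein's inequality to the resulting bounded weighted sum; the untruncated tails are harmless because $\me[\eta^2]<\infty$ gives $\sum_n\mmp\{|\eta|>u_n\}<\infty$. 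Oscillations of $X(s)$ over $s\in[s_{n+1},s_n]$ would be absorbed via a Doob-type maximal inequality applied to the finite partial sum $\sum_{p\le P_n}\eta_p/p^{1/2+s}$ (using monotonicity of $s\mapsto p^{-s}$), together with a Kolmogorov tail estimate for the remaining prime tail. The first Borel--Cantelli lemma then yields $\limsup_{s\to 0+}X(s)/\varphi(s)\le\sqrt 2\sigma$ a.s.

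For the matching lower bound I would work with the increments
\[
D_n:=X(s_n)-X(s_{n-1})=\sum_p\eta_p\,p^{-1/2}\bigl(p^{-s_n}-p^{-s_{n-1}}\bigr).
\]
A direct computation gives $\mathrm{Var}(D_n)\sim\sigma^2(1-1/\theta)\log(1/s_n)$, and shows that the weight $p^{-1/2}(p^{-s_n}-p^{-s_{n-1}})$ is concentrated on primes satisfying $\log p\in(1/s_{n-1},1/s_n)$. Restricting $D_n$ to this range produces random variables $\tilde D_n$ which are independent across $n$ (they involve disjoint sets of primes) and whose variances differ from $\mathrm{Var}(D_n)$ by $o(\mathrm{Var}(D_n))$. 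Lyapunov's CLT then yields the lower Gaussian bound $\mmp\{\tilde D_n>(1-\varepsilon)\sqrt{2\mathrm{Var}(\tilde D_n)\log n}\}\ge c_\varepsilon\,n^{-(1-\varepsilon)^2}$; since $(1-\varepsilon)^2<1$ for $\varepsilon\in(0,1)$, the second Borel--Cantelli lemma gives infinitely many occurrences a.s. Using the upper bound already established to control $X(s_{n-1})$ and the residual $D_n-\tilde D_n$ (both of order $o(\varphi(s_n))$ once $\theta$ is large), and finally letting $\theta\to\infty$, one obtains $\limsup_{s\to 0+}X(s)/\varphi(s)\ge\sqrt 2\sigma$ a.s., completing \eqref{eq:limsup}.

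The main obstacle I anticipate is the sharp Gaussian tail bound in the upper step. Under only $\me[\eta^2]<\infty$, Bernstein's inequality is quantitatively Gaussian only up to a deviation threshold set by the truncation level $u_n$, so $u_n$ must be calibrated so that the Bernstein variance proxy still matches $\sigma^2\log(1/s_n)$ while $\sum_n\mmp\{|\eta|>u_n\}$ remains summable. Coordinating this with the Doob-type maximal inequality used to interpolate between $s_n$ and $s_{n+1}$, without degrading the sharp exponent $(1+\varepsilon)^2$ in the tail bound, is the delicate bookkeeping of the argument.
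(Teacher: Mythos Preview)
Your architecture matches the paper's---upper bound via truncation and an exponential inequality along a discrete sequence, lower bound via disjoint prime blocks and the second Borel--Cantelli lemma, and \eqref{eq:limitpoints} from a.s.\ continuity of $s\mapsto X(s)$ plus the intermediate value theorem exactly as you say---but two steps do not go through as written. For the lower bound, Lyapunov's CLT gives only distributional convergence and says nothing about $\mmp\{\tilde D_n/\sqrt{\mathrm{Var}\,\tilde D_n}>(1-\varepsilon)\sqrt{2\log n}\}$: the threshold tends to infinity, placing you in the moderate-deviation regime, and under only $\me[\eta^2]<\infty$ the summands have no exponential moments, so Cram\'er-type estimates are unavailable too. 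The paper fixes this by first truncating $\eta_p$ at the level $(\log\log 1/s)^{-1}(p^{1+2s}g(s)/\log^{(3)}1/s)^{1/2}$, so that the truncated summands are bounded and hence have all exponential moments, and then performing an exponential change of measure (the tilted law $\mathbb Q_{s,u}$ in Lemma~\ref{lemma5.8}) to extract the sharp lower bound $\mmp\{Z_2(s)>1-\delta\}\ge 3^{-1}\exp(-(1-\delta^2/8)\log^{(3)}1/s)$. Without this truncation-plus-tilting step the lower half of \eqref{eq:limsup} has a genuine gap.

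For the oscillation control, monotonicity of $s\mapsto p^{-s}$ does not make $s\mapsto X(s)$ a sub- or supermartingale, and there is no L\'evy/Doob maximal inequality in the continuous parameter $s$. Abel summation bounds a finite head by $\max_k|T_k|$, which is precisely how the paper disposes of the \emph{initial} fragment $p\le M(s)$ in Lemma~\ref{lemma_2}, but this is far too crude for the principal part. Instead the paper runs a dyadic chaining argument (Lemma~\ref{lemma5}) on $[v_{n+1},v_n]$ with $v_n=1/\log(1/s_n)$, after a second truncation at level $(p/\log p)^{1/2}\log n$. Crucially this forces a \emph{sub-geometric} grid $s_n=\exp(-\exp(n^{1-\gamma}))$: the final chaining bound involves $k_n\asymp (L(s_n))^{-2}v_{n+1}^2/(v_n-v_{n+1})\asymp n^\gamma\log n$, so that $\sum_n\exp(-\varepsilon^2 k_n/4)<\infty$ for every $\varepsilon>0$; with your geometric grid $s_n=\exp(-\theta^n)$ the same computation gives only $k_n\asymp\log n$, and the series diverges for small $\varepsilon$. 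So both the mechanism (chaining rather than a maximal inequality) and the discretisation grid need to be replaced here.
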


Although a triple logarithm appears in formulae \eqref{eq:limsup} and \eqref{eq:liminf}, Theorem \ref{con:lil} is indeed an LIL. According to relation \eqref{eq:var} given below, $\me [(X(s))^2]\sim \sigma^2 \log 1/s$ as $s\to 0+$, whence $$\log\log \me [(X(s))^2]~\sim~ \log\log\log 1/s,\quad s\to 0+.$$ Here, $y(s)\sim z(s)$ as $s\to 0+$ means that $\lim_{s\to 0+}(y(s)/z(s))=1$.

Theorem \ref{con:lil} significantly improves upon 
Lemma 2.1 in \cite{Aymone+Heap+Zhao:2023} and Theorem 1.3 in \cite{Geis+Hiary:2025}.

Theorems \ref{con:clt} and \ref{con:lil} were given as Conjectures 1 and 2 in the recent article \cite{Iksanov+Kostohryz:2025}. It was shown in the cited paper that the aforementioned FCLT and LIL hold true with
$\sum_{k\geq 2}\frac{\eta_k}{(\log k)^{1/2} k^{1/2+s}}$ replacing $\sum_p\frac{\eta_p }{p^{1/2+s}}$. Actually, Conjectures 1 and 2 in \cite{Iksanov+Kostohryz:2025} were due to a comment of one of the anonymous referees of that paper. Neither the authors of \cite{Iksanov+Kostohryz:2025} nor the present authors know 
how to {\it derive} the limit theorems for $\sum_p\frac{\eta_p }{p^{1/2+s}}$ from the corresponding results for $\sum_{k\geq 2}\frac{\eta_k}{(\log k)^{1/2} k^{1/2+s}}$. Let $p_1<p_2<\ldots$ be the prime numbers arranged in the order of increase. The prime number theorem ensures that $p_k\sim k\log k$ as $k\to\infty$. In view of this, one expects intuitively that $$\sum_k \frac{\eta_{p_k}}{p_k^{1/2+s}}\asymp \sum_k \frac{\eta_{p_k}}{(k\log k)^{1/2+s}}\asymp \sum_k \frac{\eta_k}{(\log k)^{1/2} k^{1/2+s}},\quad s\to 0+,$$ where $\asymp$ denotes some form of asymptotic closeness, without precise meaning. Not being able to make these asymptotic relations precise, we prove Theorems \ref{con:clt} and \ref{con:lil} by following the line of reasoning worked out in \cite{Iksanov+Kostohryz:2025}. Naturally, some technical details differ. 
At this stage, 
we only note that $\#\{k\in\mn: k\leq x\}\sim x$ as $x\to\infty$, whereas, by the prime number theorem, $\#\{p\in\mathcal{P}: p\leq x\}\sim x/\log x$.

Next, we discuss an application to Probabilistic Number Theory. To match the notation used in number-theoretic articles, we write $f(p)$ in place of $\eta_p$ if $\mmp\{\eta_p=\pm 1\}=1/2$ for $p\in\mathcal{P}$. Wintner in \cite{Wintner:1944} introduced a Rademacher random multiplicative function $f$ as a model for the M\"{o}bius function. The function is defined by $f(1):=1$, $f(n):=\prod_{p|n} f(p)$ for square-free $n$ (that is, those given by products of distinct prime numbers) and $f(n):=0$ for $n$, which are not square-free. Thus, for instance, $f(6)=f(2)f(3)$, $f(12)=0$ and $f(30)=f(2)f(3)f(5)$. An important open problem is to prove an LIL (or an appropriate counterpart) for $\sum_{n\leq x}f(n)$ as $x\to\infty$. As far as we know, the best one-sided results available at the moment are $$\Big|\sum_{n\leq x}f(n)\Big|=O(x^{1/2}(\log\log x)^{3/4+\varepsilon}),\quad x\to\infty\quad\text{a.s.~for all}~\varepsilon>0,$$ see Theorem 1.1 in \cite{Caich:2025+}, and that there a.s.\ exist arbitrarily large values of $x$ for which $$\Big|\sum_{n\leq x}f(n)\Big|\geq x^{1/2}(\log\log x)^{1/4+o(1)},$$ see Theorem 1 in \cite{Harper:2023}. The two aforementioned papers provide surveys of earlier works, in which weaker results have been proved. Distributional convergence of $\sum_{n\leq x}f(n)$, properly normalized, as $x\to\infty$ does not seem to have been proved either, even in the sense of one-dimensional convergence. It is shown on p.~99 in \cite{Harper:2013} that the distributional limit of $(\me [(\sum_{n\leq x}f(n))^2])^{-1/2} \sum_{n\leq x}f(n)$ is not Gaussian.

Recall that a Steinhaus random multiplicative function $\alpha$ is defined as follows. The variables $(\alpha(p))_{p\in\mathcal{P}}$ are independent and uniformly distributed on $\{z\in\mathbb{C}: |z|=1\}$, where $\mathbb{C}$ denotes the set of complex numbers. Further, if $n=\prod_p p^{b_p}$, then $\alpha(n)=\prod_p (\alpha(p))^{b_p}$. It was very recently shown in \cite{Gorodetsky+Wong:2025}, see also \cite{Hardy:2025} for an important partial result, that $x^{-1/2}(\log\log x)^{1/4}\sum_{n\leq x}\alpha(n)$ converges in distribution to a random multiple of a random variable with the standard complex normal distribution.

We do not contribute directly to the open problems. By an application of Theorems \ref{con:clt} and \ref{con:lil}, we obtain an FCLT and an LIL for the logarithm of a random Dirichlet series with weights $f(n)$. It is known that $F$ defined by $F(s):=\sum_{n\geq 1} f(n)n^{-1/2-s}$ for $s>0$ admits an Euler product representation
\begin{equation}\label{eq:Euler}
F(s)=\prod_p \Big(1+\frac{f(p)}{p^{1/2+s}}\Big),\quad s>0.
\end{equation}
Since the random series $\sum_p f(p)p^{-1/2-s}$ converges a.s. and the series $\sum_p p^{-1-2s}$ converges, this representation particularly ensures that the series defining $F(s)$ converges a.s. The product on the right-hand side of the last equality is strictly positive for each $s>0$. Hence, the function $s\mapsto \log F(s)$ is real-valued.
\begin{cor}\label{cor:clt}
$$\Big(\frac{1}{(\log 1/s)^{1/2}}\Big(\log \sum_{n\geq 1} \frac{f(n)}{n^{1/2+s^t}}+\frac{\log \zeta(1+2s^t)}{2} \Big)\Big)_{t\geq 0}~\Longrightarrow~ (B(t))_{t\geq 0},\quad s\to 0+$$ on $C[0,\infty)$, where $(B(t))_{t\geq 0}$ is a standard Brownian motion, and $\zeta(r):=\sum_{n\geq 1}n^{-r}$ for $r>1$.
\end{cor}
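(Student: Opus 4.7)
The plan is to reduce Corollary \ref{cor:clt} to Theorem \ref{con:clt} via the Euler product \eqref{eq:Euler}. Taking logarithms and expanding each factor using $\log(1+x)=\sum_{k\geq 1}(-1)^{k+1}x^k/k$ (valid because $|f(p)|/p^{1/2+s}\leq 2^{-1/2}<1$ for every $p\in\mathcal{P}$ and $s\geq 0$), then using that $f(p)^k$ equals $f(p)$ for odd $k$ and $1$ for even $k$, I would obtain
\begin{equation*}
\log F(s)=\sum_p\frac{f(p)}{p^{1/2+s}}-\sum_{j\geq 1}\frac{1}{2j}\sum_p\frac{1}{p^{j(1+2s)}}+\sum_{j\geq 1}\frac{1}{2j+1}\sum_p\frac{f(p)}{p^{(2j+1)(1/2+s)}}.
\end{equation*}
Euler's identity $\log\zeta(r)=\sum_p\sum_{j\geq 1}j^{-1}p^{-jr}$ for $r>1$ shows that the middle double sum equals $\tfrac{1}{2}\log\zeta(1+2s)$, whence
\begin{equation*}
\log F(s)+\tfrac{1}{2}\log\zeta(1+2s)=\sum_p\frac{f(p)}{p^{1/2+s}}+G(s),
\end{equation*}
where $G(s):=\sum_{j\geq 1}\frac{1}{2j+1}\sum_p\frac{f(p)}{p^{(2j+1)(1/2+s)}}$ collects the odd-$k\geq 3$ contributions.

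The key technical step is to show that $G$ is deterministically bounded on $[0,\infty)$. Using $|f(p)|=1$ and Fubini,
\begin{equation*}
\sup_{s\geq 0}|G(s)|\leq \sum_p\sum_{j\geq 1}\frac{1}{(2j+1)p^{(2j+1)/2}}=\sum_p\Big(\frac{1}{2}\log\frac{1+p^{-1/2}}{1-p^{-1/2}}-p^{-1/2}\Big).
\end{equation*}
Since $\tfrac{1}{2}\log\tfrac{1+x}{1-x}-x=\sum_{j\geq 1}x^{2j+1}/(2j+1)\leq x^3/(3(1-x^2))$ for $x\in[0,1)$, and $p^{-1/2}\leq 2^{-1/2}$ on $\mathcal{P}$, the right-hand side is bounded by a constant multiple of $\sum_p p^{-3/2}<\infty$.

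With the uniform bound on $G$ in hand, the corollary follows at once: after substituting $s^t$ for $s$ and dividing by $(\log 1/s)^{1/2}$, the $G$-remainder tends to $0$ uniformly in $t$ on every compact subset of $[0,\infty)$ as $s\to 0+$, while the main term $(\log 1/s)^{-1/2}\sum_p f(p)p^{-1/2-s^t}$ converges weakly on $C[0,\infty)$ to $(B(t))_{t\geq 0}$ by Theorem \ref{con:clt} applied with $\eta\stackrel{{\rm d}}{=}f(p)$ (so that $\sigma=1$). A Slutsky-type argument in $C[0,\infty)$ then delivers the claim. Accordingly, the main obstacle—and the only genuinely new ingredient beyond Theorem \ref{con:clt}—is the uniform estimate for the remainder $G$, which rests on the convergence of $\sum_p p^{-3/2}$.
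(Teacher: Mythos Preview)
Your proposal is correct and follows essentially the same route as the paper: expand the Euler product logarithm, isolate $\sum_p f(p)p^{-1/2-s}$ (to which Theorem~\ref{con:clt} applies with $\sigma=1$), and show the remaining terms are uniformly bounded via $\sum_p p^{-3/2}<\infty$. Your odd/even splitting is in fact slightly cleaner than the paper's, since by grouping \emph{all} even $k$ you obtain exactly $-\tfrac12\log\zeta(1+2s)$ rather than only the $k=2$ piece $-\tfrac12\sum_p p^{-1-2s}$, which the paper then has to match to $\log\zeta$ at the cost of an additional $O(1)$ term.
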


\begin{cor}\label{cor:lil}
\begin{equation*}
C\Big(\Big(\Big(\frac{1}{2\log 1/s\, \log \log\log 1/s}\Big)^{1/2}\Big(\log \sum_{n\geq 1}\frac{f(n)}{n^{1/2+s}}+\frac{\log \zeta(1+2s)}{2}\Big): s\in(0, \eee^{-\eee})\Big)\Big)=[-1,1] \quad\text{\rm a.s.}
\end{equation*}
\end{cor}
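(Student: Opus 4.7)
\textbf{Proof plan for Corollary \ref{cor:lil}.} The idea is to reduce the corollary to Theorem \ref{con:lil} (with $\sigma=1$) by expanding the logarithm of the Euler product \eqref{eq:Euler} and showing that, after subtracting $-\tfrac12\log\zeta(1+2s)$, all contributions beyond the principal linear term $\sum_p f(p) p^{-1/2-s}$ are uniformly bounded, hence asymptotically negligible after the LIL-normalization.

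First, since $|f(p) p^{-1/2-s}| \leq p^{-1/2} \leq 2^{-1/2}<1$ for every prime $p$ and every $s\geq 0$, I can take the logarithm of each factor in \eqref{eq:Euler} and expand in the Taylor series to obtain
\begin{equation*}
\log F(s)=\sum_p \frac{f(p)}{p^{1/2+s}}+\frac{(-1)}{2}\sum_p \frac{f(p)^2}{p^{1+2s}}+\sum_{k\geq 3}\frac{(-1)^{k-1}}{k}\sum_p \frac{f(p)^k}{p^{k(1/2+s)}}.
\end{equation*}
Using $f(p)^2=1$, together with the classical identity $\log\zeta(r)=\sum_p \sum_{m\geq 1}m^{-1}p^{-mr}$ valid for $r>1$, the second summand rewrites as $-\tfrac12\log\zeta(1+2s)+\tfrac12\sum_p\sum_{m\geq 2}m^{-1}p^{-m(1+2s)}$, and the last (double) sum is bounded uniformly in $s\geq 0$ by $\sum_p\sum_{m\geq 2} p^{-m}<\infty$.

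Next, I need to check that the tail $\sum_{k\geq 3}\frac{(-1)^{k-1}}{k}\sum_p \frac{f(p)^k}{p^{k(1/2+s)}}$ is bounded by a deterministic constant uniformly in $s\geq 0$. This is immediate from $|f(p)^k|=1$ and a geometric estimate:
\begin{equation*}
\sum_{k\geq 3}\frac{1}{k}\sum_p \frac{1}{p^{k/2}}\leq \sum_p \frac{p^{-3/2}}{1-p^{-1/2}}<\infty.
\end{equation*}
Combining these steps yields a decomposition
\begin{equation*}
\log F(s)+\frac{\log\zeta(1+2s)}{2}=\sum_p \frac{f(p)}{p^{1/2+s}}+R(s),
\end{equation*}
with $\sup_{s\in (0,\eee^{-\eee})}|R(s)|<\infty$ almost surely (in fact deterministically).

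Finally, since $(2\log(1/s)\log\log\log(1/s))^{1/2}\to\infty$ as $s\to 0+$, dividing by this normalization kills the term $R(s)$, and the set of limit points of the left-hand side coincides with the set of limit points of $(2\log(1/s)\log\log\log(1/s))^{-1/2}\sum_p f(p)p^{-1/2-s}$. Theorem \ref{con:lil} applied to the Rademacher variables $\eta_p:=f(p)$, for which $\sigma^2=1$, identifies this set as $[-1,1]$ almost surely. I do not foresee a genuine obstacle: the only real work is the routine bookkeeping above, and the main point worth emphasizing is the algebraic alignment between the $k=2$ term of the Taylor expansion of $\log F(s)$ and the subtracted $-\tfrac12\log\zeta(1+2s)$, which is what makes the counterterm in the statement of the corollary exactly the right one.
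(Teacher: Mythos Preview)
Your proposal is correct and follows essentially the same approach as the paper: Taylor-expand the logarithm of the Euler product \eqref{eq:Euler}, match the $k=2$ term with $-\tfrac12\log\zeta(1+2s)$ up to an $O(1)$ correction, bound the $k\ge 3$ tail deterministically, and apply Theorem~\ref{con:lil} with $\sigma=1$. The paper's own proof in Section~\ref{sect:cor} is actually terser than yours, outsourcing the decomposition $\log F(s)+\tfrac12\log\zeta(1+2s)=\sum_p f(p)p^{-1/2-s}+O(1)$ to Proposition~3.2 of \cite{Geis+Hiary:2025} and Lemma~2.4 of \cite{Aymone+Heap+Zhao:2023}, whereas you supply the elementary estimates directly.
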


Recall that, for $k\geq 2$, a positive integer number is called $k$-free if it has no divisor which is a perfect $k$th power. In particular, a $2$-free number is a square-free number. Define now a random multiplicative function $f$ by $f(1):=1$, $f(n):=\prod_{p^j|n} (f(p))^j$ for $k$-free $n$, $j<k$ and $f(n):=0$ for $n$, which are not $k$-free. For instance, if $k=3$, then $f(12)=f(3)(f(2))^2=f(3)$ and $f(24)=0$. Our last result demonstrates that, for $k\geq 3$, both FCLT and LIL for $\log \sum_{n\geq 1} f(n)n^{-1/2-s}$ as $s\to 0+$ are universal and take the forms which are slightly different from those in the case $k=2$.
\begin{cor}\label{cor:cltlil2}
Let $k\geq 3$. Then the statements of Corollaries \ref{cor:clt} and \ref{cor:lil} hold true with `$-$' replacing `$+$'. For instance,
$$\Big(\frac{1}{(\log 1/s)^{1/2}}\Big(\log \sum_{n\geq 1} \frac{f(n)}{n^{1/2+s^t}}-\frac{\log \zeta(1+2s^t)}{2} \Big)\Big)_{t\geq 0}~\Longrightarrow~ (B(t))_{t\geq 0},\quad s\to 0+$$ on $C[0,\infty)$.
\end{cor}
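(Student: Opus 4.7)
The plan is to adapt to the $k$-free setting the derivation of Corollaries \ref{cor:clt} and \ref{cor:lil} from Theorems \ref{con:clt} and \ref{con:lil}; the only genuine novelty is a short sign-tracking calculation in the Taylor expansion of the logarithm of each Euler factor, which is what converts the `$+$' into a `$-$'.

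First I would write the Euler product
\[
F(s)=\prod_p A_p(s),\qquad A_p(s):=\sum_{j=0}^{k-1}\frac{f(p)^j}{p^{j(1/2+s)}}=\frac{1-(f(p)/p^{1/2+s})^k}{1-f(p)/p^{1/2+s}},
\]
valid as an a.s.\ identity by the same convergence argument outlined after \eqref{eq:Euler}. The closed form on the right shows $A_p(s)>0$ for every $p$ and $s>0$, so $\log F(s)=\sum_p\log A_p(s)$ is well defined.

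Next, for large $p$, I would expand $\log A_p(s)$. Using $f(p)^{2j}\equiv 1$ and $f(p)^{2j+1}\equiv f(p)$, for $k\geq 3$ one has $A_p(s)=1+B_p(s)$ with
\[
B_p(s)=\frac{f(p)}{p^{1/2+s}}+\frac{1}{p^{1+2s}}+O(p^{-3/2})
\]
uniformly in $s\geq 0$. Plugging this into $\log(1+B_p)=B_p-B_p^2/2+O(B_p^3)$ and retaining contributions down to order $p^{-1-2s}$ gives
\[
\log A_p(s)=\frac{f(p)}{p^{1/2+s}}+\frac{1}{2\,p^{1+2s}}+r_p(s),\qquad |r_p(s)|\leq C\,p^{-3/2},
\]
for some absolute $C$ and all $p$ exceeding a fixed $p_0$, uniformly in $s\in(0,1]$. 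The crucial point is the sign of the coefficient of $p^{-1-2s}$: in the case $k=2$ treated in Corollary \ref{cor:clt}, the factor $A_p$ contained no $p^{-1-2s}$ piece, so only $-B_p^2/2$ contributed, giving $-1/2$; for $k\geq 3$ the factor $A_p$ itself carries a $+1/p^{1+2s}$, which combined with the $-1/(2p^{1+2s})$ coming from $-B_p^2/2$ yields $+1/2$. The finitely many small primes $p\leq p_0$ contribute a function continuous in $s$ and hence a.s.\ bounded on $(0,1]$.

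Finally, using the classical bound $\sum_p\sum_{j\geq 2}p^{-jr}/j=O(1)$ uniformly for $r\geq 1$, one has $\sum_p p^{-1-2s}=\log\zeta(1+2s)+O(1)$ as $s\to 0+$, so summing the previous display over $p$ produces
\[
\log F(s)-\tfrac12\log\zeta(1+2s)=\sum_p\frac{f(p)}{p^{1/2+s}}+\rho(s),
\]
with $\rho$ an a.s.\ bounded function of $s\in(0,1]$. Dividing by $(\log 1/s)^{1/2}$ after the substitution $s\mapsto s^t$ (resp.\ by $(2\log 1/s\,\log\log\log 1/s)^{1/2}$) annihilates $\rho$ uniformly on compact $t$-intervals, and Theorem \ref{con:clt} (resp.\ Theorem \ref{con:lil}) with $\sigma=1$ yields the FCLT (resp.\ the LIL) asserted in Corollary \ref{cor:cltlil2}. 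The only real obstacle is the sign tracking in the Taylor expansion, and this amounts to the brief computation above.
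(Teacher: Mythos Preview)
Your proposal is correct and follows essentially the same route as the paper: both start from the Euler product with factors $A_p(s)=\sum_{j=0}^{k-1}(f(p)/p^{1/2+s})^j$, Taylor-expand $\log A_p(s)$ to extract $f(p)/p^{1/2+s}+\tfrac{1}{2}p^{-1-2s}$ plus an absolutely summable remainder (the paper writes this as $\sum_p(f(p)/p^{1/2+s}+p^{-1-2s})-\tfrac12\sum_p(f(p)/p^{1/2+s}+p^{-1-2s})^2+O(1)$), identify $\tfrac12\sum_p p^{-1-2s}=\tfrac12\log\zeta(1+2s)+O(1)$, and then invoke Theorems \ref{con:clt} and \ref{con:lil}. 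Your explicit comment on why the sign flips relative to the $k=2$ case is a nice touch that the paper leaves implicit.
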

\begin{remark}
While Corollaries \ref{cor:clt} and \ref{cor:lil} are results on fluctuations of $\sum_{n\geq 1}\frac{f(n)}{n^{1/2+s}}(\sum_{k\geq 1}\frac{1}{k^{1+2s}})^{1/2}$ as $s\to 0+$, Corollary \ref{cor:cltlil2} is concerned with fluctuations of $\sum_{n\geq 1}\frac{f(n)}{n^{1/2+s}}(\sum_{k\geq 1}\frac{1}{k^{1+2s}})^{-1/2}$.
\end{remark}

Sums of independent random variables indexed by prime numbers and random Euler products like $F$ in \eqref{eq:Euler} pop up frequently in the number-theoretic literature, see, for instance, Section 5 in \cite{Klurman+Lamzouri+Munsch:2025+} and already cited Lemma 2.1 in \cite{Aymone+Heap+Zhao:2023} and Theorem 1.3 in \cite{Geis+Hiary:2025}, and Section 3 in \cite{Granville+Soundararajan:2003}. To be more specific, we only mention that Theorem 3.1 in \cite{Granville+Soundararajan:2003} provides a precise asymptotic behavior with remainder of $\mmp\{\prod_{p\leq y}(1-g(p)/p)^{-1}>x\}$ and $\mmp\{\prod_{p\leq y}(1-g(p)/p)^{-1}\leq 1/x\}$ as $x$ and $y$ tend to $\infty$. Here, for $p\in\mathcal{P}$, the $g(p)$ are independent random variables with $\mmp\{g(p)=\pm 1\}=p/(2(p+1))$ and $\mmp\{g(p)=0\}=1/(p+1)$.

The remainder of the paper is organized as follows. We prove Theorems \ref{con:lil} and \ref{con:clt} in Sections \ref{sect:aux} and \ref{sect:flt}, respectively. 
Our proof of Theorem \ref{con:lil} is necessarily more technical than that of Theorem \ref{con:clt}. On the other hand, a chaining argument used in the proof of Theorem \ref{con:clt} rests heavily upon the reasoning given in the proof of Theorem \ref{con:lil}. Summarizing, it is more convenient to prove Theorem \ref{con:lil} first. We explain in Section \ref{sect:cor} how to derive Corollaries \ref{cor:clt} and \ref{cor:lil} from Theorems  \ref{con:clt} and  \ref{con:lil}. Finally, a comment on the proof of Corollary \ref{cor:cltlil2} is given in Section \ref{sect:cor3}.

\section{Proof of Theorem \ref{con:lil}}\label{sect:aux}
Put $$g(s):=\mathbb{E}\Big[\Big(\sum_{p}\frac{\eta_p}{p^{1/2+s}}\Big)^2\Big]=\sigma^2 \sum_p \frac{1}{p^{1+2s}},\quad s>0.$$ By Proposition A.3 in \cite{Geis+Hiary:2025},
\begin{equation}\label{eq:var}
g(s)~ \sim~ \sigma^2 \log(1/s),\quad s\to 0+.
\end{equation}

We obtain separately in Propositions \ref{pr1} and \ref{pr2} the upper bound for $\limsup$ and the lower bound for $\limsup$, respectively. The reason is that the arguments beyond these two results are essentially different.
From now on, we write $\log^{(3)}$ for $\log\log\log$.
\begin{proposition}\label{pr1}
Assume that $\me[\eta]=0$ and $\sigma^2=\me[\eta^2]\in(0,\infty)$. Then
\begin{equation}\label{5.1}
\limsup_{s\to 0+}\Big(\frac{1}{\log 1/s\, \log^{(3)} 1/s}\Big)^{1/2}\sum_p \frac{\eta_p}{p^{1/2+s}}\leq \sqrt{2}\sigma 
\quad\text{\rm a.s.}
\end{equation}
and
\begin{equation}\label{5.2}
\liminf_{s\to 0+}\Big(\frac{1}{\log 1/s\, \log^{(3)}1/s}\Big)^{1/2}\sum_p \frac{\eta_p}{p^{1/2+s}}\geq-\sqrt{2}\sigma 
\quad\text{\rm a.s.}
\end{equation}
\end{proposition}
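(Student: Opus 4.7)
\textbf{Proof plan for Proposition \ref{pr1}.} By the symmetry $\eta\mapsto -\eta$, the bounds \eqref{5.1} and \eqref{5.2} are equivalent, so I would only prove \eqref{5.1}. The scheme is the standard one for LIL upper bounds: truncation, a Bernstein-type exponential tail bound at fixed $s$, Borel--Cantelli along a geometric subsequence of $\log(1/s)$, and a chaining estimate to fill in between subsequence points.

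Truncate by setting $\eta_p':=\eta_p\1_{\{|\eta_p|\leq c\}}-\me[\eta_p\1_{\{|\eta_p|\leq c\}}]$ for a suitable level $c$ and put $X'(s):=\sum_p \eta_p'/p^{1/2+s}$. A routine argument using $\me[\eta^2]<\infty$ (together with Kolmogorov's one-series theorem) shows that the remainder $X(s)-X'(s)$ is uniformly $o\bigl(\sqrt{\log(1/s)\log^{(3)}(1/s)}\bigr)$ a.s., reducing \eqref{5.1} to the analogous statement for $X'$. Because $|\eta_p'/p^{1/2+s}|\leq 2c\,p^{-s}$ and $g(s):=\me[(X'(s))^2]\sim\sigma^2\log(1/s)$ by \eqref{eq:var}, Bernstein's inequality yields
$$\mmp\{X'(s)\geq \lambda\}\leq \exp\!\Big(-\frac{\lambda^2/2}{g(s)+(2c/3)\lambda}\Big),\quad \lambda>0.$$

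Fix $\theta>1$ and set $s_n:=\exp(-\theta^n)$, so $\log(1/s_n)=\theta^n$ and $\log^{(3)}(1/s_n)=\log n+O(1)$. Apply the above bound with $\lambda_n:=(1+\varepsilon)\sqrt{2g(s_n)\log^{(3)}(1/s_n)}$; since $\lambda_n=o(g(s_n))$ the Bernstein denominator is $(1+o(1))g(s_n)$, giving $\mmp\{X'(s_n)\geq \lambda_n\}\leq n^{-(1+\varepsilon)}$ for large $n$. Borel--Cantelli then delivers $\limsup_n X'(s_n)/\sqrt{2g(s_n)\log^{(3)}(1/s_n)}\leq 1+\varepsilon$ a.s. To pass to the continuous limsup as $s\to 0+$ I would show that $M_n:=\sup_{s\in[s_{n+1},s_n]}|X'(s)-X'(s_n)|=o\bigl(\sqrt{g(s_n)\log^{(3)}(1/s_n)}\bigr)$ a.s., via dyadic chaining: at level $k$ place $O(2^k)$ equally spaced points in $[s_{n+1},s_n]$, apply Bernstein to each increment, and sum the resulting tail bounds over $k$. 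The variance of $X'(s)-X'(s')$ with $s'<s$ equals $\sigma^2\sum_p p^{-1-2s'}(1-p^{-(s-s')})^2$, which can be estimated sharply using $1-e^{-x}\leq x\wedge 1$ combined with prime number theorem asymptotics for $\sum_p p^{-1-2s}(\log p)^j$, $j=1,2$. Letting $\theta\downarrow 1$ and then $\varepsilon\downarrow 0$ at the end recovers \eqref{5.1}.

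\textbf{Main obstacle.} The chaining step is the main technical hurdle, and is where the argument diverges most visibly from the integer-indexed version in \cite{Iksanov+Kostohryz:2025}. The prime sums $\sum_p p^{-1-2s}(\log p)^j$ require asymptotic estimates from PNT in place of the elementary counts $\#\{k: k\leq x\}\sim x$ used there, and these estimates must be sharp enough for the dyadic chaining sum to converge safely below the LIL threshold. A secondary point of friction is the calibration of the truncation level $c$ so that both the remainder bound and the Bernstein tail remain serviceable uniformly across all scales $s\in(0,e^{-e})$ encountered in the chain.
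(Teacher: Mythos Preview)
Your overall architecture---truncate, apply an exponential tail bound along a subsequence with $\log(1/s_n)$ growing geometrically, then chain to fill the gaps---is exactly what the paper does. The genuine gap is in the truncation step. With a cut-off $c$ independent of $p$, the remainder $X(s)-X'(s)=\sum_p \zeta_p\,p^{-1/2-s}$ (where $\zeta_p:=\eta_p\1_{\{|\eta_p|>c\}}-\me[\eta_p\1_{\{|\eta_p|>c\}}]$) is itself a Dirichlet series of the same type, with i.i.d.\ centered coefficients of variance $\sigma_c^2>0$ whenever $\eta$ is unbounded. Its second moment is $\sigma_c^2\sum_p p^{-1-2s}\sim\sigma_c^2\log(1/s)$, the \emph{same} order as $g(s)$; no appeal to Kolmogorov's one-series theorem can force this to be $o\bigl(\sqrt{\log(1/s)\log^{(3)}(1/s)}\bigr)$ a.s. To bound that remainder you would need, in effect, the very LIL upper bound you are trying to prove. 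The paper avoids this circularity by letting the truncation level grow with $p$ like $p^{1/2+s}$ (the events $A_{p,\theta}(s)$): since $\me[\eta^2]<\infty$ forces $k^{-1/2}|\eta_k|\to 0$ a.s., the indicator $\1_{A_{p,\theta}(s)}$ vanishes for all large $p$, and the truncation error is literally zero a.s.\ for small $s$ (Lemma~\ref{lemma_3}), not merely small.

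A second structural ingredient you omit is the removal of an initial segment $\sum_{p\leq M(s)}$, handled in Lemma~\ref{lemma_2} via summation by parts and the LIL for the walk $T_n=\sum_{k\leq n}\eta_{p_k}$. This is not cosmetic. In the chaining step (Lemma~\ref{lemma5}) one must compare $X$ at different values of $s$, so the truncation there must be $s$-independent; the paper takes it at level $(p/\log p)^{1/2}\log n$, and then the per-term bound in the exponential moment carries a factor $(\log p)^{-1/2}$. Restricting to $p>M(s_{n+1})$ converts this into $(\log M(s_{n+1}))^{-1/2}\to 0$, which is precisely what drives the exponent $C_j(u,n)\to 0$ and makes the chaining sum converge. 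Without the initial-segment removal the small primes spoil this bound. So what you flag as a ``secondary point of friction'' is actually load-bearing: both the $p$-dependence of the cut-off and the initial-segment splitting need to be built in from the outset.
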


Now we explain our strategy of proving \eqref{5.1}. We split the sum $\sum_p \frac{\eta_p}{p^{1/2+s}}$ into two fragments: initial and final. It is shown in Lemma \ref{lemma_2} that the contribution of the initial fragment vanishes. We prove in Lemma \ref{lemma_3} that the contribution of the final fragment vanishes, too if the variables $\eta_p$ are properly truncated. Lemma \ref{lemma4} is concerned with the final (principal) fragment of the series, in which the variables $\eta_p$ are differently truncated and also centered. Here, it is proved that the desired convergence only holds along a sequence. The most involved preparatory result is Lemma \ref{lemma5}. It shows, via a chaining argument, that the convergence along a sequence obtained in Lemma \ref{lemma4} can be upgraded to the convergence along the real numbers. We note in passing that a complicating factor here is that the finiteness of exponential moments of $\eta$ is not assumed. Under the assumption $\me [\eee^{s\eta}]<\infty$ for all $s>0$, a proof would have been much easier and shorter.

As has already been stated, our proof of Theorem \ref{con:lil}, and particularly of Proposition \ref{pr1}, follows the path of the proof of Theorem 2 in \cite{Iksanov+Kostohryz:2025}. The adaptation runs, for the most part, smoothly. The only exception is that an extra effort is required for proving (the most troublesome) Lemma \ref{lemma5}.

For $s\in(0, \eee^{-\eee})$, put $$L(s)=\Big(\frac{1}{2\log 1/s\, \log^{(3)}1/s}\Big)^{1/2}.$$
Let $M: (0,\infty)\to \mn$ denote a function satisfying $\lim_{s\to 0+}M(s)=+\infty$ and
\begin{equation}\label{eq:growth}
\lim_{s\to 0+}\frac{M(s)}{\log 1/s}=0.
\end{equation}
Replacing $\eta_p$ with $\eta_p/\sigma$ we can work under the assumption that $\sigma^2=1$. Thus, in what follows, it is tacitly assumed that $\me[\eta]=0$ and $\me[\eta^2]=1$.
\begin{lemma}\label{lemma_2}
The following convergence 
holds
$$\lim_{s\to0+} L(s)\sum_{p\leq M(s)} \frac{\eta_p}{p^{1/2+s}}=0\quad\text{\rm a.s.}$$
\end{lemma}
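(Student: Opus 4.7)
The plan is to dominate the partial sum $S_{M(s)}(s) := \sum_{p \leq M(s)} \eta_p/p^{1/2+s}$ by a random quantity depending on $s$ only through the non-decreasing index $\pi(M(s))$, and then bound the latter via a martingale maximal argument. Let $p_1 < p_2 < \cdots$ enumerate the primes, set $T_k := \sum_{j=1}^k \eta_{p_j}/p_j^{1/2}$, and $\tau_K := \max_{1 \leq k \leq K} |T_k|$. Writing $\eta_{p_k} p_k^{-1/2-s} = (T_k - T_{k-1}) p_k^{-s}$ and summing by parts (with $K = \pi(M(s))$) yields $S_{M(s)}(s) = T_K p_K^{-s} + \sum_{k=1}^{K-1} T_k(p_k^{-s} - p_{k+1}^{-s})$. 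Using the positivity of the differences $p_k^{-s} - p_{k+1}^{-s}$ for $s > 0$ and the telescoping identity $p_1^{-s} - p_K^{-s} + p_K^{-s} = p_1^{-s} \leq 1$, I obtain the $s$-uniform bound
\[
|S_{M(s)}(s)| \;\leq\; \tau_{\pi(M(s))}\cdot p_1^{-s} \;\leq\; \tau_{\pi(M(s))},
\]
so it suffices to control $L(s)\,\tau_{\pi(M(s))}$ almost surely.

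Since $(T_k)$ is a martingale of independent centered square-integrable increments, Doob's $L^2$ maximal inequality combined with Mertens' theorem gives $\me[\tau_K^2] \leq 4 \sum_{j=1}^K 1/p_j \leq C \log\log p_K$ for some absolute constant $C$. Along the sparse subsequence $K_n := \lfloor\exp(\exp(n))\rfloor$, for which $\log\log p_{K_n} \sim n$, Markov's inequality with threshold $\lambda_n = n^{1/2+\delta}$ for any fixed $\delta > 1/2$ produces $\mmp(\tau_{K_n} > \lambda_n) \leq C/n^{2\delta}$, a summable series. Borel--Cantelli then yields $\tau_{K_n} = O(n^{1/2+\delta})$ a.s., and monotonicity of $K \mapsto \tau_K$ fills in the gaps, so that $\tau_K = O((\log\log K)^{1/2+\delta})$ a.s.

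Finally, the hypothesis $M(s) = o(\log(1/s))$ forces $\log\log M(s) \leq \log^{(3)}(1/s) + O(1)$ eventually, whence $\tau_{\pi(M(s))} = O((\log^{(3)}(1/s))^{1/2+\delta})$ a.s. and
\[
L(s)\,\tau_{\pi(M(s))} \;=\; O\!\left(\frac{(\log^{(3)}(1/s))^{\delta}}{\sqrt{\log(1/s)}}\right) \;\to\; 0 \qquad\text{a.s., as } s \to 0+,
\]
for any fixed $\delta \in (1/2,\infty)$. The main obstacle is that only a second moment of $\eta$ is assumed, which rules out exponential-type large-deviation estimates that would otherwise furnish almost-sure bounds on $S_{M(s)}(s)$ uniform in $s$; the Abel-summation step circumvents this by replacing $S_{M(s)}(s)$ with the monotone envelope $\tau_K$, for which the $L^2$ Doob bound combined with a Borel--Cantelli argument along a sparse grid is enough.
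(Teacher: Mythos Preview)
Your proof is correct, but the route differs from the paper's in two respects worth noting. The paper works with the \emph{unweighted} partial sums $T_n=\sum_{k\leq n}\eta_{p_k}$, invokes the functional LIL to get $\max_{k\leq n}|T_k|=O((n\log\log n)^{1/2})$ a.s., translates this via the prime number theorem into a bound on $T^\ast(x)=\sum_{p\leq x}\eta_p$, and then applies Stieltjes integration by parts to $\int_{(1,M(s)]} x^{-1/2-s}\,{\rm d}T^\ast(x)$. You instead absorb the factor $p^{-1/2}$ into the increments, so your $T_k=\sum_{j\leq k}\eta_{p_j}p_j^{-1/2}$ has bounded variance $\sum_{j\leq k}p_j^{-1}\sim\log\log p_k$; the discrete Abel step then produces the neat $s$-uniform envelope $|S_{M(s)}(s)|\leq\tau_{\pi(M(s))}$, after which Doob's $L^2$ maximal inequality plus a Borel--Cantelli argument along the doubly-exponential grid $K_n=\lfloor e^{e^n}\rfloor$ suffice. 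Your approach is more elementary in that it avoids the LIL entirely, at the cost of the slightly looser a.s.\ bound $\tau_K=O((\log\log K)^{1/2+\delta})$ rather than $O((\log\log K)^{1/2})$; this looseness is harmless here because $L(s)$ carries a factor $(\log 1/s)^{-1/2}$ that swamps any power of $\log^{(3)}(1/s)$. Both arguments ultimately exploit that $M(s)=o(\log 1/s)$ forces $\log\log M(s)\leq \log^{(3)}(1/s)+O(1)$.
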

\begin{proof}
Let $p_1<p_2<\ldots$ be the rearrangement of the prime numbers in the increasing order. Put $T_0:=0$ and $T_n:=\eta_{p_1}+\ldots+\eta_{p_n}$ for $n\in\mn$, and then $T^\ast(x):=\sum_{p\leq x}\eta_p$ for $x\geq 1$. Observe that $T^\ast(x)=T_{\rho(x)}$ for $x\geq 1$, where $\rho(x):=\#\{p\in\mathcal{P}: p\leq x\}$.

A functional LIL for standard random walks (see, for instance, Corollary 5.3.5 on p.~294 in \cite{Stout:1974}) entails
\begin{equation*}
\max_{k\leq n}\,|T_k|=O\big((n\log\log n)^{1/2}\big),\quad n\to\infty \quad\text{a.s.}
\end{equation*}
According to the prime number theorem, $\rho(x)\sim x/\log x$ as $x\to\infty$. A combination of these two facts yields
\begin{equation}\label{5.3}
|T^\ast(x)|=|T_{\rho(x)}|\leq \max_{1\leq k\leq \rho(x)}\,|T_k|=O\big((x\log\log x/\log x)^{1/2}\big),\quad x\to\infty \quad\text{a.s.}
\end{equation}
Integrating by parts we infer
\begin{equation*}
\sum_{p\leq M(s)} \frac{\eta_p}{p^{1/2+s}}=\int_{(1,\,M(s)]}\frac{{\rm d}T^\ast(x)}{x^{1/2+s}}=\frac{T^\ast(M(s))}{(M(s))^{1/2+s}}\\+(1/2+s)\int_{1}^{M(s)}\frac{T^\ast(x)}{x^{3/2+s}}{\rm d}x.
\end{equation*}
Relation \eqref{eq:growth} entails
$\lim_{s\to 0+}(M(s))^s=1$. This together with \eqref{5.3} enables us to conclude that,
as $s\to 0+$,
\begin{equation*}
\frac{|T^\ast(M(s))|}{(M(s))^{1/2+s}}~\sim~ \frac{|T^\ast(M(s))|}{(M(s))^{1/2}}=O((\log M(s))^{-1/2}(\log\log M(s))^{1/2}) = o(1).
\end{equation*}
Since $\lim_{s\to 0+}L(s)=0$, the latter ensures that
\begin{equation*}
\lim_{s\to 0+} L(s)\frac{|T^\ast(M(s))|}{(M(s))^{1/2+s}}= 0 \quad\text{a.s.}
\end{equation*}
We are left with showing that $$\lim_{s\to 0+}L(s)\int_{1}^{M(s)}\frac{|T^\ast(x)|}{x^{3/2+s}}{\rm d}x=0\quad\text{a.s.}$$ To this end, write, with the help of \eqref{5.3},
\begin{multline*}
\int_{1}^{M(s)}\frac{|T^\ast(x)|}{x^{3/2+s}}{\rm d}x\leq \big(\max_{y\leq M(s)}\,|T^\ast(y)|\big) \int_{1}^{M(s)}\frac{{\rm d}x}{x^{3/2+s}}=\big(\max_{k\leq \rho(M(s))}\,|T_k|\big)O(1)\\=O((M(s)\log\log M(s)/\log M(s))^{1/2}),\quad s\to 0+\quad\text{a.s.}
\end{multline*}
Finally, \eqref{eq:growth} entails $$\lim_{s\to 0+}\frac{M(s)\log\log M(s)}{\log 1/s\,\log^{(3)}1/s}=0.$$ The proof of Lemma \ref{lemma_2} is complete.
\end{proof}

For $p\in \mathcal{P}$, $\theta>0$ and $s\in(0, \eee^{-\eee})$, define the event
\begin{equation*}
A_{p,\theta}(s):=\Big\{|\eta_p|>\frac{\theta}{\log\log 1/s}\Big(\frac{p^{1+2s}\, g(s)}{\log^{(3)}1/s}\Big)^{1/2}\Big\}.
\end{equation*}

\begin{lemma}\label{lemma_3}
For all $\theta>0$,
\begin{equation}\label{lem4.1}
\lim_{s\to 0+}\sum_{p\ge M(s)+1}\frac{|\eta_p|\1_{A_{p,\theta}(s)}}{p^{1/2+s}}=0\quad \text{\rm{a.s.}}
\end{equation}
and
\begin{equation}\label{lem4.2}
\lim_{s\to 0+} \sum_{p\ge M(s)+1}\frac{\me\big[|\eta_p|\1_{A_{p,\theta}(s)}\big]}{p^{1/2+s}}=0.
    \end{equation}
\end{lemma}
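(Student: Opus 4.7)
The plan is to prove~\eqref{lem4.2} first via an integral bound exploiting the identity $\int_0^\infty \me[|\eta|\1_{|\eta|>y}]\,{\rm d}y=\me[\eta^2]$, and then to deduce~\eqref{lem4.1} from~\eqref{lem4.2} using Markov's inequality, a Borel--Cantelli argument along a sparse subsequence $s_n\downarrow 0$, and an envelope bound extending from $(s_n)$ to all $s\to 0+$.

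For~\eqref{lem4.2}, set $h(a):=\me[|\eta|\1_{|\eta|>a}]$ and $c(s):=\theta^2 g(s)/((\log\log 1/s)^2\log^{(3)}1/s)$, so that $b_p(s)^2=c(s)p^{1+2s}$ and $c(s)\to\infty$ by~\eqref{eq:var}. Since $\me[|\eta_p|\1_{A_{p,\theta}(s)}]=h(b_p(s))$, the sum in~\eqref{lem4.2} equals $\sum_{p\ge M(s)+1}h(b_p(s))/p^{1/2+s}$. Replace this sum by an integral using the prime number theorem (primes have density asymptotic to $1/\log x$ at $x$) and substitute $y=b_p(s)=c(s)^{1/2}t^{(1+2s)/2}$, noting that $t^{1/2+s}=y/c(s)^{1/2}$. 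Up to a constant and asymptotically negligible $s$-dependent factors, this yields
\[
\sum_{p\ge M(s)+1}\frac{h(b_p(s))}{p^{1/2+s}}\;\le\;\frac{C}{c(s)^{1/2}}\int_{B(s)}^{\infty}\frac{h(y)}{\log(y/c(s)^{1/2})}\,{\rm d}y,\qquad B(s):=b_{M(s)+1}(s).
\]
For $y\ge B(s)$ one has $\log(y/c(s)^{1/2})\ge\tfrac{1+2s}{2}\log(M(s)+1)\ge\tfrac{1}{2}\log M(s)$ for $s$ small, so the integral is bounded by $(2/\log M(s))\int_0^\infty h(y)\,{\rm d}y=2\me[\eta^2]/\log M(s)=2/\log M(s)$. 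Since both $c(s)^{-1/2}\to 0$ and $1/\log M(s)\to 0$ as $s\to 0+$, the claim follows.

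For~\eqref{lem4.1}, by Markov's inequality and~\eqref{lem4.2} the sum converges to $0$ in probability. Choose a sparse subsequence $s_n\downarrow 0$ (for instance $s_n=\exp(-\exp n)$) along which the bound obtained in~\eqref{lem4.2} is summable; then the Borel--Cantelli lemma yields a.s.\ convergence along $(s_n)$. To extend to all $s\to 0+$ construct an $s$-independent envelope: for $s\in[s_{n+1},s_n]$ use the monotonicity $1/p^{1/2+s}\le 1/p^{1/2+s_{n+1}}$ and replace $\1_{A_{p,\theta}(s)}$ by $\1_{|\eta_p|>b_p^\ast}$ with $b_p^\ast:=\min_{s\in[s_{n+1},s_n]}b_p(s)$. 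An analysis analogous to~\eqref{lem4.2} shows that this envelope tends to $0$ a.s.\ along $n$ by another application of Borel--Cantelli, which completes~\eqref{lem4.1}.

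The main obstacle is the correct treatment of~\eqref{lem4.2}. The obvious Chebyshev bound $\me[|\eta_p|\1_{A_{p,\theta}(s)}]\le 1/b_p(s)$ produces a diverging estimate of order $g(s)^{1/2}\log\log 1/s\,(\log^{(3)}1/s)^{1/2}/\theta$, so one must instead work with the sharper tail function $h(b_p(s))$ together with the integrability $\int_0^\infty h\,{\rm d}y<\infty$; the extra factor $1/\log(y/c(s)^{1/2})\le 2/\log M(s)$ coming from the prime number theorem is exactly what closes the argument. Once~\eqref{lem4.2} is in hand, the upgrade to~\eqref{lem4.1} is standard, though care is required because $b_p(s)$ is not monotone in $s$ (the monotone regions being separated by the transition point at $\log p=-c'(s)/(2c(s))$), which is the reason for using a pointwise minimum $b_p^\ast$ in the envelope.
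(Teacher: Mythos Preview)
Your plan is workable but considerably more elaborate than the paper's, and the remark that the prime--number--theorem factor is ``exactly what closes the argument'' is inaccurate.

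For \eqref{lem4.2} the paper avoids the prime number theorem entirely: it drops $p^s$ from both the threshold and the denominator, enlarges the prime sum to a sum over all integers, swaps sum and expectation, and uses $\sum_{k\le N}k^{-1/2}\le 2\sqrt{N}$ to obtain directly the bound
\[
2\theta^{-1}(g(s))^{-1/2}(\log\log 1/s)(\log^{(3)}1/s)^{1/2}=2\,c(s)^{-1/2}\to 0.
\]
Your integral substitution produces the same $c(s)^{-1/2}$ together with an extra factor $2/\log M(s)$, but that extra factor is not needed: $c(s)^{-1/2}\to 0$ already suffices. Moreover, the step ``replace this sum by an integral using the prime number theorem'' would need justification (e.g.\ Abel summation with $\pi(x)=O(x/\log x)$ for the decreasing summand); bounding primes by all integers sidesteps this.

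For \eqref{lem4.1} you are missing the observation that renders Markov, Borel--Cantelli, subsequences and envelopes all superfluous. Since $\me[\eta^2]<\infty$, one has $k^{-1/2}|\eta_k|\to 0$ a.s.\ and hence $\sup_{k\ge 1}k^{-1/2}|\eta_k|<\infty$ a.s. But $A_{p,\theta}(s)\subseteq\{p^{-1/2}|\eta_p|>\theta\, g(s)^{1/2}/h(s)\}$ with $h(s):=(\log\log 1/s)(\log^{(3)}1/s)^{1/2}$, and $g(s)^{1/2}/h(s)\to\infty$. Hence for all sufficiently small $s$ every indicator vanishes simultaneously, and the sum in \eqref{lem4.1} is identically $0$ for small $s$---a stronger conclusion than mere convergence to $0$. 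Your route would also succeed once the envelope expectation is controlled (the crude bound $b_p^\ast\ge\big(\inf_{[s_{n+1},s_n]}c\big)^{1/2}p^{1/2}$ together with the asymptotics of $c$ does give a summable $O(c(s_n)^{-1/2})$ bound along $s_n=\exp(-\exp n)$), but it is heavy machinery for something immediate.
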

\begin{proof}
Put $h(s):=(\log\log 1/s)(\log^{(3)}1/s)^{1/2}$. For $s\in (0, \eee^{-\eee})$,
\begin{equation*}
\sum_{p\ge M(s)+1}\frac{|\eta_p|\1_{A_{p,\theta}(s)} }{p^{1/2+s}} \le \sum_{p\ge M(s)+1}\frac{|\eta_p|}{p^{1/2}}\1_{\{p^{-1/2}|\eta_p|>\theta(g(s))^{1/2} (h(s))^{-1}\}}\quad\text {{\rm a.s.}}
\end{equation*}
The assumption $\mathbb{E}[\eta^2]<\infty$ entails
\begin{equation}\label{eq:zero}
\lim_{k\to\infty}k^{-1/2}|\eta_k|=0\quad\text{a.s.}
\end{equation}
and thereupon $\sup_{k\ge 1}(k^{-1/2}|\eta_k|)<\infty$ a.s. Since $\lim_{s\to 0+}((g(s))^{1/2} (h(s))^{-1})=+\infty$, we infer
\begin{equation*}
\1_{\{p^{-1/2}|\eta_p|>\theta (g(s))^{1/2} (h(s))^{-1}\}}\le \1_{\{\sup_{k\ge 1}\,(k^{-1/2}|\eta_k|)>\theta (g(s))^{1/2} (h(s))^{-1}\}}=0
\end{equation*}
a.s.\ for small $s$. We have proved that the sum in \eqref{lem4.1} is equal to $0$ a.s.\ for small enough $s$.

Relation \eqref{lem4.2} is justified as follows:
\begin{multline*}
\sum_{p\ge M(s)+1}\frac{\me \big[|\eta_p|\1_{A_{p,\theta}(s)}\big]}{p^{1/2+s}}\le \sum_{p\ge M(s)+1}p^{-1/2}\me\big[|\eta|\1_{\{\theta^{-1}(g(s))^{-1/2} h(s)|\eta|>p^{1/2}\}}\big]\\ \le \me \Bigg[|\eta|\sum_{p\leq \lfloor \theta^{-2}(g(s))^{-1}(h(s))^2\eta^2\rfloor}p^{-1/2}\Bigg]  \le \me \Bigg[|\eta|\sum_{k\leq \lfloor \theta^{-2}(g(s))^{-1}(h(s))^2\eta^2\rfloor}k^{-1/2}\Bigg]\\\leq 2\theta^{-1}\me[\eta^2](g(s))^{-1/2} h(s)=2\theta^{-1}(g(s))^{-1/2}h(s)~\to~0,\quad s\to 0+.
\end{multline*}
The proof of Lemma \ref{lemma_3} is complete.
\end{proof}
In what follows, $(A_{p,\theta}(s))^c$ denotes the complement of $A_{p,\theta}(s)$, that is, for $p\in\mathcal{P}$, $\theta>0$ and $s\in(0, \eee^{-\eee})$,
\begin{equation*}
(A_{p,\theta}(s))^c:=\Big\{|\eta_p|\le\frac{\theta}{\log\log1/s}\Big(\frac{p^{1+2s}\, g(s)}{\log^{(3)}1/s}\Big)^{1/2}\Big\}.
\end{equation*}

\begin{lemma}\label{lemma4}
Fix any $\gamma\in (0,(\sqrt{5}-1)/2)$, pick any $\theta=\theta(\gamma)$ satisfying
\begin{equation}\label{5.7}
(1-\gamma)(1+\gamma)^2(2-\exp(2\sqrt{2}(1+\gamma)\theta))>1
\end{equation}
and put $s_n:=\exp(-\exp(n^{1-\gamma}))$ for $n\in\mathbb{N}$. Then
\begin{equation*}
\limsup_{n\to\infty} L(s_n)\sum_{p\ge M(s_n)+1}\frac{\widetilde{\eta}_{p,\theta}(s_n)}{p^{1/2+s_n}}\le 1+\gamma \quad\text{{\rm a.s.},}
\end{equation*}
where $\widetilde{\eta}_{p,\theta}(s):=\eta_p \1_{(A_{p,\theta}(s))^c}-\me \big[\eta_p\1_{(A_{p,\theta}(s))^c}\big]$ for $p\in\mathcal{P}$ and $s\in(0, \eee^{-\eee})$.
\end{lemma}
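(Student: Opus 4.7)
The plan is to apply the exponential Chebyshev inequality at each $n$, bound the resulting moment generating function via a Bernstein-type estimate, and conclude through the first Borel--Cantelli lemma. Write
\[
S_n:=L(s_n)\sum_{p\geq M(s_n)+1}\frac{\widetilde\eta_{p,\theta}(s_n)}{p^{1/2+s_n}},
\]
which for each $n$ is an a.s.\ convergent series of independent centered random variables. The definition of $(A_{p,\theta}(s))^c$ shows that each summand is dominated, uniformly in $p$, by
\[
c(s_n):=\frac{2\theta\,(g(s_n))^{1/2}}{(\log\log 1/s_n)(\log^{(3)}1/s_n)(2\log 1/s_n)^{1/2}},
\]
while the total variance is at most $V(s_n):=L(s_n)^2 g(s_n)$. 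Relation \eqref{eq:var} yields
\[
V(s_n)\sim \frac{1}{2\log^{(3)}1/s_n},\qquad c(s_n)\sim \frac{\sqrt{2}\theta}{(\log\log 1/s_n)(\log^{(3)}1/s_n)},\qquad \frac{c(s_n)}{V(s_n)}\sim \frac{2\sqrt{2}\theta}{\log\log 1/s_n}\to 0.
\]

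For any centered $X$ with $|X|\le c$ and any $\lambda\ge 0$, Taylor expansion combined with $|\me[X^k]|\le c^{k-2}\me[X^2]$ and $(k+2)!\ge 2\cdot k!$ gives $\me[\eee^{\lambda X}]\le \exp(\tfrac12 \lambda^2\me[X^2]\eee^{\lambda c})$. Independence therefore yields $\me[\exp(\lambda S_n)]\le \exp(\tfrac12 V(s_n)\lambda^2\eee^{\lambda c(s_n)})$, so exponential Chebyshev with $\lambda_n:=(1+\gamma)/V(s_n)$ delivers
\[
\mmp(S_n>1+\gamma)\le \exp\Bigl(-\frac{(1+\gamma)^2}{2V(s_n)}\bigl(2-\exp(\lambda_n c(s_n))\bigr)\Bigr).
\]
Because $\lambda_n c(s_n)\sim 2\sqrt{2}(1+\gamma)\theta/n^{1-\gamma}\to 0$, the uniform bound $\lambda_n c(s_n)\le 2\sqrt{2}(1+\gamma)\theta$ holds for all sufficiently large $n$; combined with $\log^{(3)}1/s_n=(1-\gamma)\log n$, the previous display reduces to $\mmp(S_n>1+\gamma)\le n^{-\alpha_n}$ with $\alpha_n\to (1-\gamma)(1+\gamma)^2(2-\exp(2\sqrt{2}(1+\gamma)\theta))$. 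Hypothesis \eqref{5.7} forces $\alpha_n>1$ eventually, so $\sum_n\mmp(S_n>1+\gamma)<\infty$ and Borel--Cantelli concludes the argument.

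The main subtlety is the Bernstein correction factor $\eee^{\lambda c}$ on top of the purely Gaussian term $\tfrac12 \lambda^2\me[X^2]$: it cannot be removed because $\eta$ is not assumed to have all exponential moments, and it is precisely this factor---paired with the sharp asymptotic $c(s_n)/V(s_n)\sim 2\sqrt{2}\theta/\log\log 1/s_n$---that produces the particular expression $2-\exp(2\sqrt{2}(1+\gamma)\theta)$ appearing in \eqref{5.7}. All remaining steps (the variance bound, the $L^2$-convergence of the tail series, and the Bernstein inequality itself) are routine.
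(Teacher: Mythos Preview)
Your proof is correct and follows essentially the same approach as the paper: the elementary MGF bound $\me[e^{\lambda X}]\le\exp(\tfrac12\lambda^2\me[X^2]e^{\lambda c})$ for bounded centered summands, exponential Chebyshev with $\lambda\asymp\log^{(3)}1/s_n$, and Borel--Cantelli. The only cosmetic difference is that the paper replaces $L$ by the asymptotically equivalent $L^\ast(s)=(2g(s)\log^{(3)}1/s)^{-1/2}$ and drops the extra $\log\log 1/s$ factor in the truncation bound, which converts your asymptotic estimates into exact inequalities and yields the closed form $\mmp\{X(s_n)>1+\gamma\}\le n^{-(1-\gamma)(1+\gamma)^2(2-\exp(2\sqrt{2}(1+\gamma)\theta))}$ directly.
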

\begin{proof}
Since $(1-\gamma)(1+\gamma)^2>1$ whenever $\gamma\in (0, (\sqrt{5}-1)/2)$, $\theta$ satisfying \eqref{5.7} does indeed exist.

Put $L^\ast(s):=(2g(s)\,\log^{(3)}1/s)^{-1/2}$ for $s\in (0, \eee^{-\eee})$. Since $L^\ast(s)\sim L(s)$ as $s\to 0+$, we can and do prove the result, with $L^\ast$ replacing $L$. Put
\begin{equation*}
X(s)=L^\ast(s)\sum_{p\ge M(s)+1}\frac{\widetilde{\eta}_{p,\theta}(s)}{p^{1/2+s}},\quad s\in(0, \eee^{-\eee}).
\end{equation*}
Using $\eee^x\le 1+x+(x^2/2)\eee^{|x|}$ for $x\in\mathbb{R}$ and $\mathbb{E}[\widetilde{\eta}_{p,\theta}(s)]=0$ we deduce, for $u\in\mathbb{R}$,
\begin{multline*}
\mathbb{E}[\eee^{uX(s)}]=\prod_{p\ge M(s)+1}\mathbb{E}\Big[\exp\Big(u L^\ast(s)\frac{\widetilde{\eta}_{p,\theta}(s)}{p^{1/2+s}}\Big)\Big] \\ \le \prod_{p\ge M(s)+1}\Big(1+\frac{u^2(L^\ast(s))^2}{2}\frac{1}{p^{1+2s}}\me\Big[(\widetilde{\eta}_{p,\theta}(s))^2\exp\Big(|u|L^\ast(s)
\frac{|\widetilde{\eta}_{p,\theta}(s)|}{p^{1/2+s}}\Big)\Big]\Big).
\end{multline*}
The inequality
\begin{multline}\label{5.8}
|\widetilde{\eta}_{p,\theta}(s)|\le|\eta_p|\1_{(A_{p,\theta}(s))^c}+\mathbb{E}\big[|\eta_p|\1_{(A_{p,\theta}(s))^c}\big]\\\le \frac{2\theta p^{1/2+s}}{\log\log 1/s}\Big(\frac{g(s)}{\log^{(3)}1/s}\Big)^{1/2}\le 2\theta p^{1/2+s} \Big(\frac{g(s)}{\log^{(3)}1/s}\Big)^{1/2}\quad\text{a.s.,}
\end{multline}
which is valid for $p\in\mathcal{P}$ and $s\in(0, \eee^{-\eee})$, implies that
\begin{equation*}
\exp\Big(|u|L^\ast(s)\frac{|\widetilde{\eta}_{p,\theta}(s)|}{p^{1/2+s}}\Big)\le\exp\Big(\frac{\sqrt{2}\theta|u|}{\log^{(3)}1/s}\Big)\quad\text{a.s.}
\end{equation*}
Together with the inequalities $\me[(\widetilde{\eta}_{p,\theta}(s))^2]\le 1$ and $1+x\le \eee^x$ for $x\in\mathbb{R}$ this gives,
for $u\in\mathbb{R}$,
\begin{multline}\label{5.9}
\mathbb{E}[\eee^{uX(s)}]\le\prod_{p\ge M(s)+1}\exp\Big(\frac{u^2(L^\ast(s))^2}{2}\frac{1}{p^{1+2s}}\exp\Big(\frac{\sqrt{2}\theta|u|}{\log^{(3)}1/s}\Big)\Big)\\
\le\exp\Big(\frac{u^2}{4 \log^{(3)}1/s}\exp\Big(\frac{\sqrt{2}\theta |u|}{\log^{(3)}1/s}\Big)\Big).
\end{multline}
An application of Markov's inequality yields, for $u\ge 0$,
\begin{multline*}
\mathbb{P}\{X(s_n)>1+\gamma\}\le \eee^{-(1+\gamma)u} \mathbb{E}[\eee^{u X(s_n)}] \\ \le \exp\Big(-(1+\gamma)u+\frac{u^2}{4 \log^{(3)}1/s_n}\exp\Big(\frac{\sqrt{2}\theta u}{ \log^{(3)}1/s_n}\Big)\Big).
\end{multline*}
Putting $u=2(1+\gamma)\log^{(3)}1/s_n$ we conclude that
\begin{multline*}
\mathbb{P}\{X(s_n)>1+\gamma\}\le \exp(-(1+\gamma)^2(2-\exp(2\sqrt{2}(1+\gamma)\theta)) \log^{(3)} 1/s_n)\\ =\frac{1}{n^{(1-\gamma)(1+\gamma)^2(2-\exp(2\sqrt{2}(1+\gamma)\theta))}}.
\end{multline*}
Thus, in view of \eqref{5.7}, $\sum_{n\ge 1}\mathbb{P}\{X(s_n)>1+\gamma\}<\infty$, and invoking the direct part of the Borel-Cantelli lemma completes the proof of Lemma \ref{lemma4}.
\end{proof}

\begin{lemma}\label{lemma5}
Let $(s_n)_{n\in\mathbb{N}}$ be as defined in Lemma \ref{lemma4}, where $\gamma\in (0, 1/2)$, and $$M(s)=\lfloor \log 1/s/\log\log 1/s\rfloor,\quad s\in (0,\eee^{-\eee}).$$ For $s\in [s_{n+1},s_n]$, the following limit relation holds
\begin{equation*}
\lim_{n\to\infty}L(s)\Big(\sum_{p\ge M(s)+1}\frac{\eta_p}{p^{1/2+s}}-\sum_{p\ge M(s_{n+1})+1}\frac{\eta_p}{p^{1/2+s_{n+1}}}\Big)=0\quad\text{\rm a.s.}
\end{equation*}
\end{lemma}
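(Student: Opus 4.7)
The plan is to write $D_n(s) := \sum_{p > M(s)} \eta_p/p^{1/2+s} - \sum_{p > M(s_{n+1})} \eta_p/p^{1/2+s_{n+1}}$ as $D_n(s) = A_n(s) + B_n(s)$, where
\[
A_n(s) := \sum_{M(s) < p \leq M(s_{n+1})} \frac{\eta_p}{p^{1/2+s}}, \qquad
B_n(s) := \sum_{p > M(s_{n+1})} \eta_p\Big(\frac{1}{p^{1/2+s}}-\frac{1}{p^{1/2+s_{n+1}}}\Big),
\]
capturing the change of truncation and the change of exponent, respectively. The goal is to show that each contributes $o(1/L(s_n))$ uniformly in $s \in [s_{n+1},s_n]$ almost surely, via summable-in-$n$ tail bounds together with the Borel--Cantelli lemma.

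For $A_n$: since $\log M(s_k) \sim k^{1-\gamma}$, the factor $p^{s_{n+1}-s}$ equals $1+o(1)$ uniformly in $p \leq M(s_{n+1})$ and $s \in [s_{n+1},s_n]$ (because $s_n \log M(s_{n+1}) \sim s_n(n+1)^{1-\gamma}$ is doubly-exponentially small). It therefore suffices to control $\sup_{M(s_n) < y \leq M(s_{n+1})}\big|\sum_{M(s_n) < p \leq y} \eta_p/p^{1/2+s_{n+1}}\big|$. Its variance is $\sigma^2\sum_{M(s_n) < p \leq M(s_{n+1})} p^{-1-2s_{n+1}} \sim \log\log M(s_{n+1}) - \log\log M(s_n) \sim 1/n$ by Mertens' estimate, so Kolmogorov's maximal inequality yields
$\mmp\{L(s_n)\sup_s|A_n(s)| > \epsilon\} \leq C L(s_n)^2/(n \epsilon^2) \leq C/(n \epsilon^2 \exp(n^{1-\gamma})\log n)$, which is summable.

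For $B_n$, I would split the primes at a threshold such as $p^\ast := \exp(1/s_n)$. On the range $M(s_{n+1}) < p \leq p^\ast$, $p^s$ stays bounded uniformly in $s \in [s_{n+1},s_n]$; a Taylor expansion $p^{-s}-p^{-s_{n+1}} = -(s-s_{n+1})(\log p)p^{-\xi_p}$ with $\xi_p \in [s_{n+1},s]$ lets one pull out $(s-s_{n+1})$ and apply Kolmogorov's maximal inequality to the partial sums of $\eta_p(\log p)/p^{1/2}$, whose variance up to $p^\ast$ is of order $(\log p^\ast)^2 = 1/s_n^2$; the doubly-exponential smallness of $s_n$ then makes $L(s_n)^2(s-s_{n+1})^2/s_n^2$ summable. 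On the range $p > p^\ast$, the inequality $(1/p^{1/2+s}-1/p^{1/2+s_{n+1}})^2 \leq 4/p^{1+2s_{n+1}}$ reduces matters to $\sum_{p > p^\ast}p^{-1-2s_{n+1}}$, which by an exponential-integral estimate is $\sim \log(s_n/s_{n+1}) \sim (1-\gamma)n^{-\gamma}\exp(n^{1-\gamma})$, so the variance multiplied by $L(s_n)^2 \sim 1/(\exp(n^{1-\gamma})\log n)$ is of order $n^{-\gamma}/\log n$. A dyadic cover of $[s_{n+1},s_n]$ with a union bound promotes these pointwise variance estimates to the required uniform bound.

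The principal obstacle is the $B_n$ contribution: because $s_n/s_{n+1}$ is doubly-exponentially large in $n$, the naive bound $|B_n(s)| \leq (s-s_{n+1})\sup_t|R_n(t)|$ with $R_n(t) := \sum_{p > M(s_{n+1})} \eta_p(\log p)/p^{1/2+t}$ blows up, since $\me[R_n(s_{n+1})^2]$ is of order $1/s_{n+1}^2$. One must split the primes at $p^\ast$ and track cancellations in the integral $\int_{s_{n+1}}^s R_n(t)\,{\rm d}t$ carefully --- this is precisely the additional effort the authors flag as going beyond the $k$-indexed case of \cite{Iksanov+Kostohryz:2025}.
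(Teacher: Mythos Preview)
Your decomposition $D_n(s)=A_n(s)+B_n(s)$ matches the paper's split into $I_{n,1}$ and $I_{n,2}$, and your treatment of $A_n$ (via Kolmogorov's maximal inequality and the Mertens estimate $\sum_{M(s_n)<p\le M(s_{n+1})}p^{-1}\asymp 1/n$) is a legitimate alternative to the paper's direct LIL-based argument for $I_{n,1}$; both work.

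The gap is in $B_n$. Your large-prime estimate is correct as a \emph{variance} bound: after splitting at $p^\ast=\exp(1/s_n)$ you get $L(s_n)^2\,\mathrm{Var}\big(B_n^{(2)}(s)\big)\asymp n^{-\gamma}/\log n$. But for $\gamma\in(0,1/2)$ the series $\sum_n n^{-\gamma}/\log n$ diverges, so Chebyshev plus Borel--Cantelli does not close, even at a single $s$. Upgrading to the supremum over $s$ via a dyadic cover only makes matters worse: a union bound over $2^j$ grid points against a Chebyshev tail of order $V_j/(\varepsilon a_j)^2$ yields $\sum_j 2^j V_j/a_j^2$, which diverges whenever the increments scale like $V_j\asymp 2^{-j}$ and $\sum_j a_j<\infty$. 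Second-moment tools are simply too weak here; chaining requires subgaussian (or at least exponential) increment tails so that the factor $2^j$ is beaten.

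This is exactly why the paper first truncates $\eta_p$ at level $(p/\log p)^{1/2}\log n$ (your proposal never truncates): the resulting $\widehat\eta_p(n)$ are bounded, so one has honest exponential moment bounds for the chaining increments. The paper then runs the dyadic chaining in the variable $v=1/\log(1/s)$, obtaining tails of the form $\exp(-c\,k_n)$ with $k_n\asymp n^\gamma\log n$, which \emph{is} summable. Without the truncation step (and the accompanying verification that the truncated-off pieces are negligible, formulas \eqref{lem4.11}--\eqref{lem4.21} in the paper), the $B_n$ part of your argument does not go through.
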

\begin{proof}
Using the fact that $M$ is a nonincreasing function for the arguments close to $0$, write, for $s\in [s_{n+1},s_n]$,
\begin{multline*}
\sum_{p\ge M(s)+1}\frac{\eta_p}{p^{1/2+s}}-\sum_{p\ge M(s_{n+1})+1}\frac{\eta_p}{p^{1/2+s_{n+1}}}\\=\sum_{M(s)+1\leq p\leq M(s_{n+1})}\frac{\eta_p}{p^{1/2+s}}+\sum_{k\ge M(s_{n+1})+1} \Big(\frac{1}{p^{1/2+s}}-\frac{1}{p^{1/2+s_{n+1}}}\Big)\eta_p=:I_{n,1}(s)+I_{n,2}(s).
\end{multline*}

\noindent {\sc Analysis of $I_{n,1}(s)$.} It will be proved more than we need, namely, $$\lim_{n\to\infty}\sup_{s\in[s_{n+1},\,s_n]}|I_{n,1}(s)|=0\quad \text{a.s.}$$ We obtain with the help of integration by parts
\begin{equation*}
I_{n,1}(s)=\frac{T^\ast(M(s_{n+1}))}{(M(s_{n+1}))^{1/2+s}}-\frac{T^\ast(M(s)+1)}{(M(s)+1)^{1/2+s}}+(1/2+s)\int_{M(s)+1}^{M(s_{n+1})}\frac{T^\ast(x)}{x^{3/2+s}}{\rm d}x+o(1),
\end{equation*}
where, as before, $T^\ast(x)=\sum_{p\leq x}\eta_p$ for $x\geq 1$. The term $o(1)$ is equal to $0$ if $M(s)+1\notin \mathcal{P}$ and is equal to $\eta_{M(s)+1}(M(s)+1)^{-1/2-s}$, otherwise. In view of \eqref{eq:growth} and \eqref{eq:zero}, $\lim_{s\to 0+}\eta_{M(s)+1}(M(s)+1)^{-1/2-s}=0$ a.s. Invoking formula \eqref{5.3} and $\lim_{n\to\infty}(M(s_{n+1}))^{s_{n+1}}=1$ we infer
\begin{multline*}
\frac{|T^\ast(M(s_{n+1}))|}{(M(s_{n+1}))^{1/2+s}}\le \frac{|T^\ast(M(s_{n+1}))|}{(M(s_{n+1}))^{1/2+s_{n+1}}}~\sim~ \frac{|T^\ast(M(s_{n+1}))|}{(M(s_{n+1}))^{1/2}}\\=O((\log\log M(s_{n+1})/\log M(s_{n+1}))^{1/2})\to 0,\quad n\to\infty\quad\text{\rm a.s.}
\end{multline*}
By a similar argument we conclude that
\begin{equation*}
\lim_{n\to\infty}\sup_{s\in[s_{n+1},\,s_n]}\frac{|T^\ast(M(s)+1)|}{(M(s)+1)^{1/2+s}}=0\quad\text{\rm a.s.}
\end{equation*}
Further, for $s\in [s_{n+1}, s_n]$ and large $n$,
\begin{multline*}
\Bigg|\int_{M(s)+1}^{M(s_{n+1})}\frac{T^\ast(x)}{x^{3/2+s}}{\rm d}x\Bigg| \le \int_{M(s)+1}^{M(s_{n+1})}\frac{|T^\ast(x)|}{x^{3/2+s}}{\rm d}x \le
\big(\sup_{y\le M(s_{n+1})}|T^\ast(y)|\big)\int_{M(s)+1}^{M(s_{n+1})}\frac{{\rm d}x}{x^{3/2+s}}\\ \le
\big(\sup_{y\le M(s_{n+1})}|T^\ast(y)|\big)\frac{2}{(M(s))^{1/2+s}} \le \big(\sup_{y\le M(s_{n+1})}|T^\ast(y)|\big)\frac{2}{(M(s_n))^{1/2+s_{n+1}}}\\ =
O((\log\log M(s_{n+1})/\log M(s_{n+1}))^{1/2})~\to~ 0,\quad n\to\infty\quad\text{\rm a.s.}
\end{multline*}
We have used \eqref{5.3}, $\lim_{n\to\infty}(M(s_{n+1})/M(s_n))=1$ and $\lim_{n\to\infty}(M(s_n))^{s_{n+1}}=1$ for the equality.

\noindent {\sc  Analysis of $I_{n,2}(s)$.} We claim that
\begin{equation}\label{lem4.11}
\lim_{n\to\infty}\sup_{s\geq s_{n+1}}\sum_{p\ge M(s_{n+1})+1}\Big(\frac{1}{p^{1/2+s_{n+1}}}-\frac{1}{p^{1/2+s}}\Big)|\eta_p|\1_{\{|\eta_p|>(p/\log p)^{1/2}\log n\}}\\=0\quad \text{\rm{a.s.}}
\end{equation}
and
\begin{equation}\label{lem4.21}
\lim_{n\to\infty}\sup_{s\geq s_{n+1}}\sum_{p\ge M(s_{n+1})+1}\Big(\frac{1}{p^{1/2+s_{n+1}}}-\frac{1}{p^{1/2+s}}\Big)\mathbb{E}\big[|\eta_p|\1_{\{|\eta_p|>(p/\log p)^{1/2}\log n\}}\big]=0.
\end{equation}

As before, let $p_1<p_2<\ldots$ be the sequence of prime numbers arranged in the order of increase. By the prime number theorem, $p_k/\log p_k\sim k$ as $k\to\infty$. We note for later use that this implies that, given $\delta\in (0,1)$,
\begin{equation}\label{eq:ineq}
p_k/\log p_k\geq \delta^2 k
\end{equation}
for large $k$. By the direct part of the Borel-Cantelli lemma, $\lim_{k\to\infty}(p_k/\log p_k)^{-1/2}|\eta_{p_k}|=0$ a.s. and thereupon $\sup_{k\geq 1}\,(p_k/\log p_k)^{-1/2}|\eta_{p_k}|<\infty$ a.s. Hence, relation \eqref{lem4.11} follows from
\begin{multline*}
\sum_{k:~p_k\ge M(s_{n+1})+1}\Big(\frac{1}{p_k^{1/2+s_{n+1}}}-\frac{1}{p_k^{1/2+s}}\Big)|\eta_{p_k}|\1_{\{|\eta_{p_k}|>(p_k/\log p_k)^{1/2}\log n\}}\\\leq \sum_{k:~p_k\ge M(s_{n+1})+1}\frac{|\eta_{p_k}|}{p_k^{1/2}}\1_{\{(p_k/\log p_k)^{-1/2}|\eta_{p_k}|>\log n\}}\quad\text {{\rm a.s.}}
\end{multline*}
and the fact that the summands on the right-hand side are equal to $0$ for large enough $n$. Relation \eqref{lem4.21} follows along the lines of the proof of \eqref{lem4.2}:
\begin{multline*}
\sum_{k:~p_k\ge M(s_{n+1})+1}\Big(\frac{1}{p_k^{1/2+s_{n+1}}}-\frac{1}{p_k^{1/2+s}}\Big)\mathbb{E}\big[|\eta_{p_k}|\1_{\{|\eta_{p_k}|>(p_k/\log p_k)^{1/2}\log n\}}\big]\\ \le \sum_{k:~p_k\ge M(s_{n+1})+1}\frac{1}{p_k^{1/2}}\me\big[|\eta|\1_{\{|\eta|>\delta k^{1/2}\log n\}}\big] \le \sum_{k\ge M(s_{n+1})+1}\frac{1}{k^{1/2}}\me\big[|\eta|\1_{\{k<(\delta \log n)^{-2}\eta^2\}}\big]\\\leq \me \Big[|\eta|\sum_{k\leq \lfloor (\delta \log n)^{-2}\eta^2\rfloor}\frac{1}{k^{1/2}}\Big] \leq 2\delta^{-1}\me[\eta^2](\log n)^{-1}~\to~0,\quad n\to\infty.
\end{multline*} 
We have used \eqref{eq:ineq} for the first inequality.

Put $\widehat{\eta}_p(n):=\eta_p\1_{\{|\eta_p|\leq (p/\log p)^{1/2}\log n\}}-\me \big[\eta_p\1_{\{|\eta_p|\leq (p/\log p)^{1/2}\log n\}}\big]$ for $p\in\mathcal{P}$ and $n\in\mn$. For $n\in\mathbb{N}$ and small positive $u$, put
\begin{equation*}
Y^\ast_n(u):=\sum_{p\ge M(s_{n+1})+1}\frac{\widehat{\eta}_p(n)}{p^{1/2+u}}.
\end{equation*}
In view of \eqref{lem4.11} and \eqref{lem4.21}, we are left with showing that, for each $s\in[s_{n+1}, s_n]$,
\begin{equation*}
\lim_{n\to\infty}L(s_n)(Y^\ast_n(s)-Y^\ast_n(s_{n+1}))=0\quad\text{\rm a.s.}
\end{equation*}
We shall prove an equivalent limit relation: for each $v\in [v_{n+1}, v_n]$,
\begin{equation}\label{5.11}
\lim_{n\to\infty}L(s_n)(Y_n(v)-Y_n(v_{n+1}))=0\quad\text{\rm a.s.},
\end{equation}
where $Y_n(v):=Y_n^\ast(\exp(-1/v))$, $v_n:=1/(\log 1/s_n)=\exp(-n^{1-\gamma})$ for $n\in\mn$ and $v>0$.

For $j\in\mathbb{N}_0$ and $n\in\mathbb{N}$, put
\begin{equation*}
F_j(n):=\{t_{j,\,m}(n):=v_{n+1}+2^{-j}m(v_n-v_{n+1}):0\le m\le 2^j\}.
\end{equation*}
Note that $F_j(n)\subseteq F_{j+1}(n)$ and put $F(n):=\bigcup_{j\ge 0}F_j(n)$. The set $F(n)$ is dense in the interval $[v_{n+1},v_n]$. For any $u\in[v_{n+1},v_n]$, put
\begin{equation*}
u_j:=\max\{v\in F_j(n): v\le u\}=v_{n+1}+2^{-j}(v_n-v_{n+1})\Big\lfloor\frac{2^j(u-v_{n+1})}{v_n-v_{n+1}}\Big\rfloor.
\end{equation*}
Then $\lim_{j\to\infty} u_j=u$ (we omit the dependence on $n$ in the notation). An important observation is that either $u_{j-1}=u_j$ or $u_{j-1}=u_j-2^{-j}(v_n-v_{n+1})$. Consequently, $u_j=t_{j,m}$ for some $0\le m \le 2^j$, which implies that either $u_{j-1}=t_{j,\,m}$ or $u_{j-1}=t_{j,\,m-1}$. Since $Y_n$ is a.s. continuous on $[v_{n+1}, v_n]$, we obtain
    \begin{multline*}
        |Y_n(u)-Y_n(v_{n+1})|=\lim_{l\to\infty}|Y_n(u_l)-Y_n(v_{n+1})|\\ =\lim_{l\to\infty}\Big|\sum_{j=1}^{l}(Y_n(u_j)-Y_n(u_{j-1}))+Y_n(u_0)-Y_n(v_{n+1})\Big|\\ \le \lim_{l\to\infty}\sum_{j=0}^l\max_{1\le m \le 2^j}|Y_n(t_{j,\,m})-Y_n(t_{j,\,m-1})|\\=\sum_{j\ge 0}\max_{1\le m \le 2^j}|Y_n(t_{j,\,m})-Y_n(t_{j,\,m-1})|.
    \end{multline*}
Thus, our purpose is to prove that, for all $\varepsilon>0$ and sufficiently large $n_0\in\mathbb{N}$,
\begin{equation*}
\sum_{n\ge n_0}\mathbb{P}\Big\{\sum_{j\ge 0}\max_{1\le m \le 2^j}L(s_n)|Y_n(t_{j,\,m})-Y_n(t_{j,\,m-1})|>\varepsilon\Big\}<\infty.
\end{equation*}
Put $a_j:=(j+1)2^{-j/2}$ for $j\in\mathbb{N}_0$. In view of $\sum_{j\ge 0}a_j<\infty$, it suffices to show that, for all $\varepsilon>0$,
\begin{equation}\label{5.12}
\sum_{n\ge n_0}\sum_{j\ge 0}\mathbb{P}\Big\{\max_{1\le m \le 2^j}L(s_n)|Y_n(t_{j,\,m})-Y_n(t_{j,\,m-1})|>\varepsilon a_j \Big\}<\infty.
\end{equation}

The subsequent argument is similar to that used in the proof of Lemma \ref{lemma4}. In view of this, we only give a sketch.  
Write, for $u\in\mathbb{R}$ and sufficiently large $n$,
\begin{multline*}
\me\big[\exp\big(\pm u(Y_n(t_{j,\,m})-Y_n(t_{j,\,m-1}))\big)\big]\\=\mathbb{E}\Big[\exp\Big(\pm u \sum_{p\ge M(s_{n+1})+1}\frac{1}{p^{1/2}}\Big(\frac{1}{p^{\exp(-1/t_{j,\,m})}}-\frac{1}{p^{\exp(-1/t_{j,\,m-1})}}\Big)\widehat{\eta}_p(n) \Big)\Big] \\ \le \prod_{p\ge M(s_{n+1})+1}\Big(1+\frac{u^2}{2p} 
\Big(\frac{1}{p^{\exp(-1/t_{j,\,m})}}-\frac{1}{p^{\exp(-1/t_{j,\,m-1})}}\Big)^2\\\times\mathbb{E}\Big[(\widehat{\eta}_p(n))^2\exp\Big(\frac{|u|}{p^{1/2} 
}\Big(\frac{1}{p^{\exp(-1/t_{j,\,m})}}-\frac{1}{p^{\exp(-1/t_{j,\,m-1})}}\Big)|\widehat{\eta}_p(n)|\Big)\Big].
\end{multline*}
Now we prove that, for large $n$, all $j\in\mn_0$ and integer $m\in [0,2^j]$ and some constant $C>0$,
\begin{equation*}
A_j(n):=\sum_{p\ge M(s_{n+1})+1}\frac{1}{p}\Big(\frac{1}{p^{\exp(-1/t_{j,\,m})}}-\frac{1}{p^{\exp(-1/t_{j,\,m-1})}}\Big)^2 \leq \frac{C2^{-j}(v_n-v_{n+1})}{v_{n+1}^2}.
\end{equation*}
For $x>\eee$, put $L_{j,\,m}(x)=(x\log x)^{-1}(x^{-\exp(-1/t_{j,\,m})}-x^{-\exp(-1/t_{j,\,m-1})})^2$. The function $L_{j,\,m}$ is differentiable and decreasing on $(\eee,\infty)$, whence $L^\prime_{j,\,m}(x)\leq 0$ for $x>\eee$. Also, for $x\geq \eee$, put $A(x):=\int_{(\eee,\,x]}\log x\, {\rm d}\rho(x)$. By the prime number theorem, for $x_0$ large enough (in particular, $x_0>\eee$) there exists $C_0>0$ such that $A(x)\leq C_0x$ for $x\geq x_0$. Hence, there exists $C_1>C_0$ such that $A(x)\leq C_1(x-\eee)$ for $x\geq x_0$. Also, there exists $C_2>0$ such that $A(x)\leq C_2(x-e)$ for $x\in [\eee,\,x_0]$. Thus, putting $C:=\max (C_1, C_2)$ we infer $A(x)\leq C(x-\eee)$ for $x\geq \eee$. Integrating by parts twice, forward and then backward, we obtain
\begin{multline*}
A_j(n) \leq \int_\eee^\infty L_{j,\,m}(x){\rm d}A(x)=\int_\eee^\infty (-L^\prime_{j,\,m}(x))A(x){\rm d}x\leq C\int_\eee^\infty  (-L^\prime_{j,\,m}(x))(x-\eee){\rm d}x\\=C\int_\eee^\infty L_{j,\,m}(x){\rm d}x.
\end{multline*}
According to formula (17) in \cite{Iksanov+Kostohryz:2025}, $$\int_\eee^\infty L_{j,\,m}(x){\rm d}x\leq \frac{1}{t_{j,\,m-1}}-\frac{1}{t_{j,\,m}}\leq \frac{2^{-j}(v_n-v_{n+1})}{v_{n+1}^2}.$$ This proves the claimed inequality for $A_j(n)$.

Next, we estimate $$B_j(u,p,n):=\frac{|u|}{p^{1/2}}\Big(\frac{1}{p^{\exp(-1/t_{j,\,m-1})}}-\frac{1}{p^{\exp(-1/t_{j,\,m})}}\Big)|\widehat{\eta}_p(n)|$$ for $p\geq \lfloor M(s_{n+1})\rfloor+1$. Assume first that $2^j\geq v_{n+1}^{-2}(v_n-v_{n+1})$ for integer $j$. For $a>0$ and $0<s<t$, the following 
holds $$0\leq \exp(-a\eee^{-1/s})-\exp(-a\eee^{-1/t})\leq a\exp(-a\eee^{-1/s})\eee^{-1/t}(1/s-1/t).$$ Putting
$a=\log p$, $t=t_{j,\,m-1}$ and $s=t_{j,\,m}$ we obtain $$0\leq \frac{1}{p^{\exp(-1/t_{j,\,m-1})}}-\frac{1}{p^{\exp(-1/t_{j,\,m})}}\leq \frac{\log p}{p^{\exp(-1/t_{j,\,m-1})}}\eee^{-1/t_{j,\,m}}\Big(\frac{1}{t_{j,\,m-1}}-\frac{1}{t_{j,\,m}}\Big).$$ Using $x\eee^{-ax}\leq (\eee a)^{-1}$ for $x\geq 0$ and our present assumption concerning $j$ we further conclude that the right-hand side does not exceed
\begin{multline*}
\frac{1}{\eee}\eee^{1/t_{j,\,m-1}-1/t_{j,\,m}}\Big(\frac{1}{t_{j,\,m-1}}-\frac{1}{t_{j,\,m}}\Big)\\ \leq \frac{1}{\eee} \exp\Big(\frac{2^{-j}(v_n-v_{n+1})}{v_{n+1}^2}\Big)\frac{2^{-j}(v_n-v_{n+1})}{v_{n+1}^2}\leq \frac{2^{-j}(v_n-v_{n+1})}{v_{n+1}^2}.
\end{multline*}
We have shown that $$B_j(u,p,n)\leq \frac{|u|2^{1-j}\log n}{(\log M(s_{n+1}))^{1/2}} \frac{v_n-v_{n+1}}{v_{n+1}^2}=:C_j(u,n)$$ whenever $p\geq \lfloor M(s_{n+1})\rfloor+1$ and $2^j\geq v_{n+1}^{-2}(v_n-v_{n+1})$.

Assume now that $2^j\leq v_{n+1}^{-2}(v_n-v_{n+1})$ for nonnegative integer $j$. In view of $$\frac{1}{p^{\exp(-1/t_{j,\,m-1})}}-\frac{1}{p^{\exp(-1/t_{j,\,m})}}\leq 1$$ we infer
$$B_j(u,p,n)\leq \frac{2|u|\log n}{(\log M(s_{n+1}))^{1/2}}=:C_j(u,n)$$ whenever $p\geq \lfloor M(s_{n+1})\rfloor+1$.

Using now $1+x\le \eee^x$ for $x\in\mathbb{R}$ we arrive at
\begin{equation*}
\me\big[\exp\big(\pm u(Y_n(t_{j,\,m})-Y_n(t_{j,\,m-1}))\big)\big] \leq \exp\Big(\frac{u^2}{2}\frac{C2^{-j}(v_n-v_{n+1})}{v_{n+1}^2}\eee^{C_j(u,n)}\Big),\quad u\in\mathbb{R}.
\end{equation*}
By Markov's inequality and the inequality $\eee^{u|x|}\le \eee^{ux}+\eee^{-ux}$ for $x\in\mathbb{R}$,
\begin{multline*}
\mathbb{P}\big\{L(s_n)|Y_n(t_{j,\,m})-Y_n(t_{j,\,m-1})|>\varepsilon a_j \big\}\leq \exp\Big(-u\frac{\varepsilon a_j}{L(s_n)}\Big)\mathbb{E}[\exp(u|Y_n(t_{j,\,m})-Y_n(t_{j,\,m-1})|)]\\\leq  2\exp\Big(-\frac{u\varepsilon a_j}{L(s_n)}+\frac{u^2}{2}\frac{C2^{-j}(v_n-v_{n+1})}{v_{n+1}^2}\eee^{C_j(u,n)}\Big).
\end{multline*}
Put $$u=\frac{\varepsilon 2^{j/2}}{CL(s_n)}\frac{v_{n+1}^2}{v_n-v_{n+1}}\quad\text{and}\quad k_n:=\frac{1}{C(L(s_n))^2}\frac{v_{n+1}^2}{v_n-v_{n+1}}.$$ As a consequence of $$(\log M(s_{n+1}))^{-1/2}~\sim~ n^{\gamma/2-1/2},~\Big(\frac{v_{n+1}}{v_n-v_{n+1}}\Big)^{1/2}~\sim~(1-\gamma)^{-1/2} n^{\gamma/2},\quad n\to\infty$$ and $$\lim_{n\to\infty} (\log 1/s_n)v_{n+1}=1,$$ we infer, for $j$ satisfying $2^{j}\geq v_{n+1}^{-2}(v_n-v_{n+1})$,
\begin{multline*}
C_j(u,n)=\frac{|u|2^{1-j}\log n}{(\log M(s_{n+1}))^{1/2}}\frac{v_n-v_{n+1}}{v_{n+1}^2}=\frac{\varepsilon 2^{1-j/2}\log n}{CL(s_n)(\log M(s_{n+1}))^{1/2}}\\\leq \frac{2\varepsilon \log n}{CL(s_n)(\log M(s_{n+1}))^{1/2}}\frac{v_{n+1}}{(v_n-v_{n+1})^{1/2}}\\=\frac{\varepsilon 2^{3/2}((\log 1/s_n) v_{n+1})^{1/2}(\log^{(3)}1/s_n)^{1/2}\log n}{C(\log M(s_{n+1}))^{1/2}}\Big(\frac{v_{n+1}}{v_n-v_{n+1}}\Big)^{1/2}\\~\sim~ \varepsilon C^{-1} 2^{3/2}n^{\gamma-1/2}(\log n)^{3/2}~\to~0,\quad n\to\infty.
\end{multline*}
If nonnegative integer $j$ satisfies $2^{j}\leq v_{n+1}^{-2}(v_n-v_{n+1})$, then
\begin{multline*}
C_j(u,n)=\frac{2|u|\log n}{(\log M(s_{n+1}))^{1/2}}=\frac{\varepsilon 2^{1+j/2}\log n}{CL(s_n)(\log M(s_{n+1}))^{1/2}}\frac{v_{n+1}^2}{v_n-v_{n+1}}\\\leq \frac{2\varepsilon \log n}{CL(s_n)(\log M(s_{n+1}))^{1/2}}\frac{v_{n+1}}{(v_n-v_{n+1})^{1/2}}~\to~0,\quad n\to\infty.
\end{multline*}
Thus, we obtain, for large $n$ satisfying $k_n>\varepsilon^{-2} \log 2$ and $\eee^{C_j(u,n)}\leq 3/2$,
\begin{multline*}
\sum_{j\ge 0}\mathbb{P}\Big\{\max_{1\le m \le 2^j}L(s_n)|Y_n(t_{j,\,m})-Y_n(t_{j,\,m-1})|>\varepsilon a_j \Big\}\\ \le \sum_{j\ge 0}2^j 2\exp(-\varepsilon^2(j+1)k_n)\exp\big(3\varepsilon^2k_n/4\big)=\frac{2\exp(-\varepsilon^2 k_n/4)}{1-2\exp(-\varepsilon^2 k_n)}.
\end{multline*}
Since $k_n\sim 2C^{-1} n^\gamma\log n$ as $n\to\infty$, \eqref{5.12} follows.
\end{proof}

After all these preparations we are ready to prove Proposition \ref{pr1}.
\begin{proof}[Proof of Proposition \ref{pr1}]

Recall our convention that $\sigma^2=1$. To prove \eqref{5.1}, we choose any $\gamma>0$ sufficiently close to $0$, put $s_n=\exp(-\exp(n^{1-\gamma}))$ for $n\in\mn$ and select $\theta=\theta(\gamma)$ such that \eqref{5.7} holds true. Let $M(s)=\lfloor \log 1/s/\log\log 1/s\rfloor$ for $s\in (0,\eee^{-\eee})$.
Using Lemmas \ref{lemma_3} and \ref{lemma4} in combination with $\mathbb{E}\big[\eta_p\1_{(A_{p,\theta}(s))^c}\big]=-\me\big[\eta_p\1_{A_{p,\theta}(s)}\big]$ we conclude that
\begin{equation}\label{5.16}
\limsup_{n\to\infty} L(s_n)\sum_{p\ge M(s_n)+1}\frac{\eta_p}{p^{1/2+s_n}}\le 1+\gamma\quad\text{\rm a.s.}
\end{equation}
Relation \eqref{5.16} together with Lemma \ref{lemma5} ensures that
$$\limsup_{s\to 0+} L(s)\sum_{p\ge M(s)+1}\frac{\eta_p}{p^{1/2+s}}\le 1+\gamma\quad\text{\rm a.s.}$$ With these at hand, an application of Lemma \ref{lemma_2} yields
\begin{equation*}
\limsup_{s\to 0+}L(s)\sum_p \frac{\eta_p}{p^{1/2+s}}\le 1+\gamma\quad\text{\rm a.s.}
\end{equation*}
Sending $\gamma\to 0+$ we arrive at \eqref{5.1}.

Relation \eqref{5.2} follows from \eqref{5.1}, with $-\eta_p$ replacing $\eta_p$.
\end{proof}

\begin{proposition}\label{pr2}
Assume that $\me[\eta]=0$ and $\sigma^2=\me[\eta^2]\in(0,\infty)$. Then
\begin{equation}\label{5.17}
\limsup_{s\to 0+}\Big(\frac{1}{\log 1/s\,\log^{(3)}1/s}\Big)^{1/2}\sum_p \frac{\eta_p}{p^{1/2+s}}\geq \sqrt{2}\sigma 
\quad\text{\rm a.s.}
\end{equation}
and
\begin{equation}\label{5.18}
\liminf_{s\to 0+}\Big(\frac{1}{\log 1/s \,\log^{(3)}1/s}\Big)^{1/2}\sum_p \frac{\eta_p}{p^{1/2+s}}\leq-\sqrt{2}\sigma 
\quad\text{\rm a.s.}
\end{equation}
\end{proposition}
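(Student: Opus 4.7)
The plan is to prove \eqref{5.17} by applying the converse Borel--Cantelli lemma to a sequence of independent ``block'' sums carved out of $X(s)$ along a fast-decreasing subsequence $s_n\to 0+$; the bound \eqref{5.18} then follows by replacing $\eta_p$ with $-\eta_p$. Continuing to assume $\sigma^2=1$, fix $\alpha>1$ and put $s_n:=\exp(-\alpha^n)$, so that $\log 1/s_n=\alpha^n$ and $\log^{(3)}1/s_n\sim\log n$; fix a slowly growing sequence $c_n\to\infty$, say $c_n:=\log n$, and set $M_n:=\lfloor\exp(c_n/s_n)\rfloor$. Since $s_n\log M_{n-1}=c_{n-1}s_n/s_{n-1}\to 0$ and $s_n\log M_n=c_n\to\infty$, the substitution $u=2s_n\log x$ in the integral representation underlying \eqref{eq:var} delivers
$$\sum_{p\leq M_{n-1}}\frac{1}{p^{1+2s_n}}~\sim~\log 1/s_{n-1}\quad\text{and}\quad\sum_{p\leq M_n}\frac{1}{p^{1+2s_n}}~=~\log 1/s_n+o(1),\quad n\to\infty.$$
In particular, the independent block sums
$$A_n:=\sum_{M_{n-1}<p\leq M_n}\frac{\eta_p}{p^{1/2+s_n}}$$
have variance $\sigma_n^2:=\mathrm{Var}(A_n)=(1+o(1))(1-1/\alpha)\log 1/s_n$.

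The heart of the argument is a matching Gaussian lower bound for the upper tail of $A_n$. Replace $\eta_p$ by the centred truncation $\widehat{\eta}_p(n):=\eta_p\1_{\{|\eta_p|\leq (p/\log p)^{1/2}\log n\}}-\me\big[\eta_p\1_{\{|\eta_p|\leq (p/\log p)^{1/2}\log n\}}\big]$ of Lemma \ref{lemma5} and form $\widehat{A}_n$, whose variance is again $(1+o(1))\sigma_n^2$. Since $|\widehat{\eta}_p(n)|/p^{1/2+s_n}\leq 2\sqrt{\log n}\,\eee^{-\alpha^{n-1}/2}$ uniformly in $p\in(M_{n-1},M_n]$, the maximal summand of $\widehat{A}_n$ is doubly exponentially smaller than $\sigma_n$, much more than required for a Cram\'er-type moderate deviation expansion of $\widehat{A}_n/\sigma_n$ up to the scale $t=t_n:=(1-\varepsilon)\sqrt{2\log^{(3)}1/s_n/(1-1/\alpha)}$. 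Consequently,
$$\mmp\big\{\widehat{A}_n>(1-\varepsilon)L^{-1}(s_n)\big\}~=~\mmp\big\{\widehat{A}_n/\sigma_n>t_n(1+o(1))\big\}~\geq~\frac{c}{n^{(1-\varepsilon)^2/(1-1/\alpha)}}$$
for all large $n$ and some $c=c(\alpha,\varepsilon)>0$. Choosing $\alpha$ so large that $(1-\varepsilon)^2<1-1/\alpha$, the right-hand side is not summable in $n$. Independence of the events across $n$ and the converse Borel--Cantelli lemma then yield $\limsup_{n\to\infty}L(s_n)\widehat{A}_n\geq 1-\varepsilon$ a.s., and the argument of Lemma \ref{lemma_3} shows that $L(s_n)(A_n-\widehat{A}_n)\to 0$ a.s., so the same bound holds for $A_n$.

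It remains to control the residual $R_n:=X(s_n)-A_n=T_n+S_n$ with $T_n:=\sum_{p\leq M_{n-1}}\eta_p/p^{1/2+s_n}$ and $S_n:=\sum_{p>M_n}\eta_p/p^{1/2+s_n}$. The variance of $S_n$ is $O(\eee^{-2c_n}/c_n)=O(n^{-2}/\log n)$, summable in $n$, so Chebyshev's inequality and the direct Borel--Cantelli lemma give $S_n\to 0$ a.s.; the variance of $T_n$ is $(1+o(1))\alpha^{-1}\log 1/s_n$, and a fourth-moment maximal argument along $(s_n)$ combined with the truncation used in Lemma \ref{lemma_3}, or alternatively another application of Proposition \ref{pr1} to a companion truncated series at the parameter $s_{n-1}$, yields $\limsup_{n\to\infty}L(s_n)|T_n|\leq C\alpha^{-1/2}$ a.s.\ for some absolute $C>0$. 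Combining everything,
$$\limsup_{s\to 0+}L(s)\sum_p\frac{\eta_p}{p^{1/2+s}}~\geq~\limsup_{n\to\infty}L(s_n)X(s_n)~\geq~1-\varepsilon-C\alpha^{-1/2}\quad\text{a.s.}$$
Sending first $\varepsilon\to 0+$ and then $\alpha\to\infty$ gives \eqref{5.17}, and \eqref{5.18} follows on applying the same argument to $-\eta_p$. The principal technical obstacle is the Gaussian lower bound in the second paragraph: without exponential moments of $\eta$, the truncation level $(p/\log p)^{1/2}\log n$ must be calibrated so that the Cram\'er expansion remains valid up to $t\sim\sqrt{\log n}$ while the truncation error $A_n-\widehat{A}_n$ is a.s.\ negligible after normalisation by $L(s_n)$. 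This is exactly the balance already struck in Lemmas \ref{lemma_3}--\ref{lemma5}, and the same techniques should transplant to the present block setting.
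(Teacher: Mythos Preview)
Your overall strategy is sound and takes a genuinely different route from the paper. Both arguments extract independent blocks and apply the converse Borel--Cantelli lemma, but the paper uses the faster subsequence $\mathfrak{s}_n=\exp(-\exp(n^{1+\gamma}))$ together with a lower cut $N_1(\mathfrak{s}_n)$ chosen so that the initial fragment has variance $o(\log 1/\mathfrak{s}_n)$ (Lemma~\ref{lemma5.7}); this makes the initial piece vanish outright, so no secondary limit in an auxiliary parameter is needed. The paper also carries out the exponential tilt by hand in Lemma~\ref{lemma5.8} rather than citing a moderate-deviation theorem. Your geometric choice $s_n=\exp(-\alpha^n)$ keeps the initial fragment $T_n$ at a fixed fraction $\alpha^{-1}$ of the total variance, which forces the two-step limit $\varepsilon\to 0$, then $\alpha\to\infty$; in exchange, since the Lyapunov ratio for $\widehat{A}_n$ is doubly exponentially small while $1-\Phi(t_n)$ decays only polynomially in $n$, even Berry--Esseen already delivers the tail lower bound you need.

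The one place where your argument is genuinely incomplete is the control of $T_n$. Neither alternative you offer works as written. A raw fourth-moment bound requires $\me[\eta^4]<\infty$, which is not assumed; what does work is truncation followed by the exponential Markov inequality, exactly as in Lemma~\ref{lemma4} or Lemma~\ref{lemma5.7}: with $\eta_p$ replaced by $\widetilde{\eta}_{p,\theta}(s_n)$, the MGF bound~\eqref{5.9} applied to the sum over $p\le M_{n-1}$ (whose variance is $(1+o(1))\alpha^{-1}g(s_n)$) gives $\mmp\{L(s_n)|T_n^{\rm trunc}|>(1+\gamma)\alpha^{-1/2}\}\le c\,n^{-(1+\gamma)^2}$, summable for every $\gamma>0$, and detruncation is handled by Lemmas~\ref{lemma_2} and~\ref{lemma_3}. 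Your second alternative, invoking Proposition~\ref{pr1} ``at the parameter $s_{n-1}$'', is not direct either: it controls $L(s_n)|X(s_{n-1})|\le(1+o(1))\alpha^{-1/2}$, but passing from $X(s_{n-1})$ to $T_n$ still requires estimating the change-of-exponent term $\sum_{p\le M_{n-1}}\eta_p\big(p^{-1/2-s_n}-p^{-1/2-s_{n-1}}\big)$, whose variance is of order $\log\log n$ --- a computation you have not supplied. Either route can be completed, but you should choose one and write it out in full.
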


We briefly explain the strategy behind the proof of 
\eqref{5.17}. The sum $\sum_p \frac{\eta_p}{p^{1/2+s}}$ will now be split into three fragments: initial, intermediate and final. It will be shown in Lemma \ref{lemma5.7} that the initial and the final fragments do not contribute to the LIL provided that the variables $\eta_p$ within each fragment are properly truncated and centered. Lemma \ref{lemma5.8} treats the intermediate fragment which gives a principal contribution to the LIL. Our proof is based on the converse part of the Borel-Cantelli lemma, which requires independence. The independence requirement complicates to some extent a selection of the intermediate fragment. Unlike in the proof of \eqref{5.1}, it suffices to prove \eqref{5.17} for a suitable sequence. This is a simplifying feature of \eqref{5.17} in comparison to \eqref{5.1}.

In what follows we assume without further notice that $\me [\eta]=0$ and $\sigma^2={\rm Var}\,[\eta]=1$. Also, we use the sets $A_{p,\theta}(s)$ and the corresponding variables $\widetilde{\eta}_{p,\theta}(s)$ with $\theta=1$.
\begin{lemma}\label{lemma5.7}
Fix any $\gamma>0$ and put $\mathfrak{s}_n:=\exp(-\exp(n^{1+\gamma}))$ for $n\ge 1$. Let $N_1$ and $N_2$ be functions which take positive integer values, may depend on $\gamma$ and satisfy\footnote{For instance, one can take $N_1(s)=\lfloor (\log 1/s)^{1/2}\rfloor$ or $N_1(s)=\lfloor\exp(\eee^{\log 1/s/\log\log 1/s})\rfloor$ and $N_2(s)=\lfloor \exp((1/s)\log 1/s)\rfloor$.} $\lim_{s\to 0+}N_1(s)=\infty$, $\lim_{s\to 0+}(\log\log N_1(s)/\log 1/s)=0$, $\lim_{s\to 0+}(\log\log N_2(s)/\log 1/s)=1$ and $\lim_{s\to 0+}s\log N_2(s)=\infty$. Then
\begin{equation}\label{5.19}
\lim_{n\to\infty}L(\mathfrak{s}_n) \sum_{p\leq N_1(\mathfrak{s}_n)}\frac{\widetilde{\eta}_{p,1}(\mathfrak{s}_n)}{p^{1/2+\mathfrak{s}_n}}=0\quad\text{\rm a.s.}
\end{equation}
and
\begin{equation}\label{5.20}
\lim_{n\to\infty}L(\mathfrak{s}_n) \sum_{p\ge N_2(\mathfrak{s}_n)+1}\frac{\widetilde{\eta}_{p,1}(\mathfrak{s}_n)}{p^{1/2+\mathfrak{s}_n}}=0\quad\text{\rm a.s.}
\end{equation}
\end{lemma}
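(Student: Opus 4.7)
The plan is to adapt the exponential-moment argument of Lemma \ref{lemma4} to each of the two fragments separately, then apply the direct part of the Borel--Cantelli lemma along $(\mathfrak{s}_n)$. Denote $I_1(s):=\{p\in\mathcal{P}: p\leq N_1(s)\}$, $I_2(s):=\{p\in\mathcal{P}: p\geq N_2(s)+1\}$, and the partial variances
$$V_i(s):=\sum_{p\in I_i(s)}\frac{1}{p^{1+2s}},\qquad i\in\{1,2\}.$$
The key preliminary is the bound $V_i(s)=o(g(s))$. For $i=1$, Mertens' theorem gives $V_1(s)\leq\sum_{p\leq N_1(s)}p^{-1}=O(\log\log N_1(s))$, which, together with the hypothesis on $N_1$ and \eqref{eq:var}, yields $V_1(s)=o(\log 1/s)=o(g(s))$. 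For $i=2$, Abel summation combined with the prime number theorem gives
$$V_2(s)\leq C\int_{N_2(s)}^{\infty}\frac{{\rm d}x}{x^{1+2s}\log x}\leq \frac{C\,(N_2(s))^{-2s}}{2s\log N_2(s)}\to 0,\qquad s\to 0+,$$
by the assumption $s\log N_2(s)\to\infty$.

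Writing $Z_i(s):=L^\ast(s)\sum_{p\in I_i(s)}\widetilde{\eta}_{p,1}(s)/p^{1/2+s}$ with $L^\ast(s):=(2g(s)\log^{(3)}1/s)^{-1/2}\sim L(s)$, I would reproduce the chain of inequalities culminating in \eqref{5.9}, restricted to $p\in I_i(s)$ and with $\theta=1$ in the a.s.\ bound \eqref{5.8}, to obtain
$$\me\big[\eee^{u Z_i(s)}\big]\le \exp\Big(\frac{u^2}{4\log^{(3)}1/s}\cdot\frac{V_i(s)}{g(s)}\cdot\exp\Big(\frac{\sqrt{2}|u|}{\log^{(3)}1/s}\Big)\Big),\qquad u\in\mr.$$
Applying Markov's inequality to $\eee^{u|Z_i(s)|}\le \eee^{u Z_i(s)}+\eee^{-u Z_i(s)}$ with $u=c\log^{(3)}1/s$ for a fixed $c>0$ yields, for every $\varepsilon>0$,
$$\mmp\{|Z_i(s)|>\varepsilon\}\le 2\exp\Big(-c\varepsilon\log^{(3)}1/s+\frac{c^2 \eee^{\sqrt{2}c}\, V_i(s)}{4 g(s)}\log^{(3)}1/s\Big).$$

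Since $V_i(s)/g(s)\to 0$, the second term in the exponent is eventually dominated by $(c\varepsilon/2)\log^{(3)}1/s$, so at $s=\mathfrak{s}_n$, where $\log^{(3)}1/\mathfrak{s}_n=(1+\gamma)\log n$, the probability bound becomes $2 n^{-c\varepsilon(1+\gamma)/2}$. Taking $c$ large enough that $c\varepsilon(1+\gamma)/2>1$ makes the series summable, and Borel--Cantelli then gives $Z_i(\mathfrak{s}_n)\to 0$ a.s., which is \eqref{5.19} for $i=1$ and \eqref{5.20} for $i=2$. The only mild technical point is the verification that both partial variances are negligible compared to $g(s)$; beyond that, no new ideas relative to Lemma \ref{lemma4} are needed, and there is no serious obstacle to overcome.
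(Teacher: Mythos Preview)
Your proposal is correct and follows essentially the same route as the paper's proof: both establish the exponential-moment bound of Lemma~\ref{lemma4} restricted to the fragment $I_i(s)$, show that the corresponding partial variance $V_i(s)$ is $o(g(s))$ (via Mertens/prime-number-theorem estimates for $i=1$ and via the tail integral for $i=2$), and then apply Markov plus Borel--Cantelli along $(\mathfrak{s}_n)$. The only cosmetic difference is in the choice of the Chernoff parameter: the paper fixes $u=\varepsilon^{-1}\log^{(3)}1/\mathfrak{s}_n$ and exploits that $V_i/g$ can be made smaller than any prescribed $r>0$, whereas you fix $u=c\log^{(3)}1/\mathfrak{s}_n$ with $c$ large; both lead to a summable bound $n^{-\beta}$ with $\beta>1$.
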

\begin{proof}
As in the proof of Lemma \ref{lemma4}, we obtain the result, with $L^\ast$ replacing $L$. For $s>0$ close to $0$, put
\begin{equation*}
Z_1(s):=L^\ast(s)\sum_{p\leq N_1(s)}\frac{\widetilde{\eta}_{p,1}(s)}{p^{1/2+s}}.
\end{equation*}
The reasoning used to derive both \eqref{5.19} and \eqref{5.20} is similar to the one applied in the proof of Lemma \ref{lemma4}. Therefore, we give a proof of \eqref{5.19} and indicate the only minor change needed for a proof of
\eqref{5.20}.

Regarding \eqref{5.19}, in view of the direct part of the Borel–Cantelli lemma, it is sufficient to demonstrate that, for all $\varepsilon>0$,
\begin{equation}\label{5.21}
\sum_{n\ge 1}\mathbb{P}\{Z_1(\mathfrak{s}_n)>\varepsilon\}<\infty.
\end{equation}
To this end, we obtain a counterpart of \eqref{5.9}, for $u\in\mathbb{R}$,
\begin{equation*}
\mathbb{E}[\eee^{uZ_1(s)}]\le\exp\Big(\frac{u^2(L^\ast(s))^2}{2}\sum_{p\leq N_1(s)}\frac{1}{p^{1+2s}}\exp\Big(\frac{\sqrt{2}|u|}{ \log^{(3)}1/s}\Big)\Big).
\end{equation*}
Recalling the notation $\rho(x)=\#\{p\in\mathcal{P}: p\leq x\}$ for $x\geq 1$ we obtain with the help of integration by parts
\begin{equation*}
\sum_{p\leq N_1(s)}p^{-1-2s}=\int_{(1,\,N_1(s)]}x^{-1-2s}{\rm d}\rho(x)= (N_1(s))^{-1-2s}\rho(N_1(s))+(1+2s)\int_1^{N_1(s)}x^{-2-2s} \rho(x){\rm d}x.
\end{equation*}
The assumption $\lim_{s\to 0+}(\log\log N_1(s)/\log 1/s)=0$ entails $\lim_{s\to 0+}s\log N_1(s)=0$. By the prime number theorem, $\rho(x)\sim x/\log x$ as $x\to\infty$. As a consequence, $$\lim_{s\to 0+}(N_1(s))^{-1-2s}\rho(N_1(s))=0.$$ By another application of the prime number theorem, as $s\to 0+$, $$\int_1^{N_1(s)}x^{-2-2s} \rho(x){\rm d}x~\sim~\int_2^{N_1(s)}x^{-1-2s}(\log x)^{-1}{\rm d}x=\int_{(2\log 2) s}^{2s\log N_1(s)} y^{-1}\eee^{-y}{\rm d}y~\sim~ \log\log N_1(s).$$ Thus, we have proved that
\begin{equation}\label{5.22}
\sum_{p\leq N_1(s)}p^{-1-2s}~\sim~ \log\log N_1(s),\quad s\to 0+.
\end{equation}
By assumption, $\lim_{s\to 0+}(\log\log N_1(s)/g(s))=0$. Hence, there exists an $r>0$ close to $0$ 
such that
\begin{equation*}
\sum_{p\leq N_1(s)}p^{-1-2s}\le r g(s)
\end{equation*}
for small positive $s$, and also $(1-\delta)(1+\gamma)>1$, where $\delta:=r(4\varepsilon^2)^{-1}\exp(\sqrt{2}\varepsilon^{-1})$ with $\varepsilon$ appearing in \eqref{5.21}. Then,
for $u\in\mathbb{R}$ and small $s>0$,
\begin{multline*}
\mathbb{E}[\eee^{uZ_1(s)}]\le\exp\Big(\frac{ru^2 g(s)(L^\ast(s))^2}{2}\exp\Big(\frac{\sqrt{2}|u|}{ \log^{(3)}1/s}\Big)\Big)=\exp\Big(\frac{ru^2}{4 \log^{(3)}1/s}\exp\Big(\frac{\sqrt{2}|u|}{ \log^{(3)}1/s}\Big)\Big).
\end{multline*}
Put $u=(1/\varepsilon)\log^{(3)} 1/\mathfrak{s}_n$. By Markov's inequality, we infer, for large $n$,
\begin{equation*}
\mathbb{P}\{Z_1(\mathfrak{s}_n)>\varepsilon\}\le \eee^{-u\varepsilon}\mathbb{E}[\eee^{uZ_1(\mathfrak{s}_n)}]\le\exp\big(-\big(1-r(4\varepsilon^2)^{-1}\eee^{\sqrt{2}\varepsilon^{-1}}\big)\log^{(3)} 1/\mathfrak{s}_n\big)=\frac{1}{n^{(1-\delta)(1+\gamma)}}.
\end{equation*}
This proves \eqref{5.21} and \eqref{5.19}.

We omit a proof of \eqref{5.20}, which is analogous to that of \eqref{5.19}, and only point out a counterpart of \eqref{5.22}:
\begin{equation}\label{5.23}
\sum_{p\ge N_2(s)+1} p^{-1-2s}=o(1),\quad s\to 0+.
\end{equation}
This is justified as follows:
$$\sum_{p\ge N_2(s)+1} p^{-1-2s}=\int_{N_2(s)+1}^\infty x^{-1-2s}{\rm d}\rho(x)=-\frac{\rho(N_2(s)+1)}{(N_2(s)+1)^{1+2s}}+(1+2s)\int_{N_2(s)+1}^\infty x^{-2-2s}\rho(x){\rm d}x.$$ The first summand is $o(1)$ as $s\to 0+$. By the prime number theorem, as $s\to 0+$, $$\int_{N_2(s)+1}^\infty x^{-2-2s}\rho(x){\rm d}x~\sim~ \int_{N_2(s)+1}^\infty x^{-1-2s}(\log x)^{-1}{\rm d}x=\int_{2s\log (N_2(s)+1)}^\infty \eee^{-y}y^{-1}{\rm d}y=o(1).$$ We have used the assumption $\lim_{s\to 0+} s\log (N_2(s)+1)=\infty$ for the last equality.
\end{proof}

\begin{lemma}\label{lemma5.8}
Fix sufficiently small $\delta>0$, pick $\gamma>0$ satisfying \newline $(1+\gamma)(1-\delta^2/8)<1$ and let, as before, $\mathfrak{s}_n=\exp(-\exp(n^{1+\gamma}))$ for $n\in\mn$. Then
\begin{equation*}
\limsup_{n\to\infty} L(\mathfrak{s}_n) \sum_p \frac{\widetilde{\eta}_{p,1}(\mathfrak{s}_n)}{p^{1/2+\mathfrak{s}_n}}\ge 1-\delta\quad\text{\rm a.s.}
\end{equation*}
\end{lemma}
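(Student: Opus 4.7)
The plan is to apply the converse Borel-Cantelli lemma to events built from the intermediate block of primes. By Lemma \ref{lemma5.7}, it will suffice to show $\limsup_{n\to\infty} W_n\ge 1-\delta$ a.s., where
$$W_n\,:=\,L(\mathfrak{s}_n)\sum_{N_1(\mathfrak{s}_n)<p\le N_2(\mathfrak{s}_n)}\frac{\widetilde{\eta}_{p,1}(\mathfrak{s}_n)}{p^{1/2+\mathfrak{s}_n}}.$$
To secure independence of the $W_n$'s, I would take the footnote choices $N_1(s)=\lfloor\exp(\eee^{\log(1/s)/\log\log(1/s)})\rfloor$ and $N_2(s)=\lfloor\exp((1/s)\log(1/s))\rfloor$. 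Using $\log\log 1/\mathfrak{s}_n=n^{1+\gamma}$, one verifies directly that $N_2(\mathfrak{s}_n)\le N_1(\mathfrak{s}_{n+1})$ for all large $n$, because $\exp((n+1)^{1+\gamma})/(n+1)^{1+\gamma}$ eventually dominates $\exp(n^{1+\gamma})+n^{1+\gamma}$. Hence the prime blocks of different $W_n$'s are pairwise disjoint, and $(W_n)_{n\ge n_0}$ is an independent sequence.

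Next I would compute $B_n:={\rm Var}(W_n)$. Arguments parallel to the derivations of \eqref{5.22} and \eqref{5.23} in the proof of Lemma \ref{lemma5.7} give $\sum_{p\le N_1(\mathfrak{s}_n)}p^{-1-2\mathfrak{s}_n}\sim\log\log N_1(\mathfrak{s}_n)=o(g(\mathfrak{s}_n))$ and $\sum_{p>N_2(\mathfrak{s}_n)}p^{-1-2\mathfrak{s}_n}=o(1)$; and since the truncation threshold defining $\widetilde{\eta}_{p,1}(\mathfrak{s}_n)$ tends to $+\infty$ uniformly in $p\ge N_1(\mathfrak{s}_n)$, one also has ${\rm Var}(\widetilde{\eta}_{p,1}(\mathfrak{s}_n))\to 1$ uniformly. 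Combined with \eqref{eq:var}, these give
$$B_n\,\sim\,L(\mathfrak{s}_n)^2\, g(\mathfrak{s}_n)\,\sim\,\frac{1}{2\log^{(3)}1/\mathfrak{s}_n}\,=\,\frac{1}{2(1+\gamma)\log n}.$$

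The crux will be a sharp exponential lower bound on $\mmp\{W_n>1-\delta\}$. By \eqref{5.8}, each summand of $W_n$ is bounded in absolute value by $K_n:=\sqrt{2}/((\log\log 1/\mathfrak{s}_n)\log^{(3)}1/\mathfrak{s}_n)$, and one checks $K_n/B_n\sim 2\sqrt{2}/n^{1+\gamma}\to 0$. Kolmogorov's classical lower exponential bound for sums of bounded independent centered random variables (see, e.g., \cite{Stout:1974}) will then furnish a sequence $\varepsilon_n\to 0$ with
$$\mmp\{W_n>1-\delta\}\,\ge\,\exp\Big(-\frac{(1-\delta)^2(1+\varepsilon_n)}{2B_n}\Big)\,\ge\,n^{-(1-\delta)^2(1+\gamma)(1+\varepsilon_n)}$$
for all $n$ large enough. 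Because $(1-\delta^2/8)-(1-\delta)^2=2\delta-9\delta^2/8>0$ for small $\delta$, the standing hypothesis $(1+\gamma)(1-\delta^2/8)<1$ will force $(1-\delta)^2(1+\gamma)(1+\varepsilon_n)\le 1$ for all large $n$, so that $\sum_n\mmp\{W_n>1-\delta\}=\infty$. By independence, the converse Borel-Cantelli lemma then delivers $W_n>1-\delta$ infinitely often a.s., and adding back the two vanishing fragments from Lemma \ref{lemma5.7} completes the argument.

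The hard part will be this sharp lower tail bound with the correct constant $(1-\delta)^2/(2B_n)$ in the exponent up to a $1+o(1)$ factor, under the sole moment hypothesis $\me[\eta^2]<\infty$. This is precisely why the truncation level in $\widetilde{\eta}_{p,1}$ has to be so delicately calibrated: it must simultaneously bound each summand at a scale $K_n$ with $K_n/B_n\to 0$ (so that the Kolmogorov lower bound's error $\varepsilon_n$ tends to $0$) and yet preserve the variance $B_n\sim 1/(2(1+\gamma)\log n)$.
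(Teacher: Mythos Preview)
Your proof is correct and follows the same architecture as the paper: the three-block decomposition, the verification that the intermediate blocks are independent across $n$ via $N_2(\mathfrak{s}_n)\le N_1(\mathfrak{s}_{n+1})$, and the converse Borel--Cantelli lemma. The only substantive difference lies in how the lower tail estimate $\mathbb{P}\{W_n>1-\delta\}\gtrsim n^{-\alpha}$ is obtained. You invoke Kolmogorov's classical lower exponential bound as a black box (legitimate here, since the summands are bounded by $K_n$ with $K_n/B_n\to 0$ and $(1-\delta)/\sqrt{B_n}\to\infty$), whereas the paper rederives this bound from scratch by exponential tilting: it introduces the tilted measure $\mathbb{Q}_{s,u}$, shows $\mathbb{Q}_{s,u}(\{1-\delta<Z_2(s)\le 1\})\ge 1/3$, and computes the moment generating function asymptotics to arrive at $\mathbb{P}\{Z_2(s)>1-\delta\}\ge 3^{-1}\exp(-(1-\delta^2/8)\log^{(3)}1/s)$. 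Your shortcut is cleaner and even yields the slightly sharper exponent $(1-\delta)^2$ in place of the paper's $1-\delta^2/8$; the paper's version is more self-contained and makes visible why the truncation level was chosen. Both are the same underlying Cram\'er--Kolmogorov argument.
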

\begin{proof}
For any function $r$ such that $\lim_{s\to 0+}r(s)=+\infty$, we can choose a function $N_1$ in Lemma \ref{lemma5.7} satisfying $$\lim_{s\to 0+}\frac{\log\log N_1(s)}{\log 1/s}r(s)=+\infty.$$ Thus, relation $\lim_{n\to\infty}(\mathfrak{s}_{n+1}/\mathfrak{s}_n)=\infty$ ensures that there exists an $N_1$ satisfying $$\frac{\log\log N_1(\mathfrak{s}_{n+1})}{\log 1/\mathfrak{s}_n}=\frac{\log\log N_1(\mathfrak{s}_{n+1})}{\log 1/\mathfrak{s}_{n+1}}\frac{\log 1/\mathfrak{s}_{n+1}}{\log 1/\mathfrak{s}_n}~\to~+\infty,\quad n\to\infty.$$ For instance, one can take $N_1(s)=\lfloor\exp(\eee^{\log 1/s/\log\log 1/s})\rfloor$.

Let $N_2$ be any function satisfying the assumptions of Lemma \ref{lemma5.7}. Since $$\lim_{n\to\infty}\frac{\log\log N_2(\mathfrak{s}_n)}{\log 1/\mathfrak{s}_n}=1,$$ we conclude that there exists $n_0\in\mn$ such that $$\frac{\log\log N_1(\mathfrak{s}_{n+1})}{\log 1/\mathfrak{s}_n}\geq \frac{\log\log N_2(\mathfrak{s}_n)}{\log 1/\mathfrak{s}_n}\quad\text{for all}~n\geq n_0,$$ whence
\begin{equation}\label{5.24}
N_1(\mathfrak{s}_{n+1})\ge N_2(\mathfrak{s}_n)\quad\text{for all}~n\ge n_0.
\end{equation}

Put, for small $s>0$,
\begin{equation*}
Z_2(s):=L(s)\sum_{N_1(s)<p\leq N_2(s)}\frac{\widetilde{\eta}_{p,1}(s)}{p^{1/2+s}}.
\end{equation*}
Lemma \ref{lemma5.7} ensures it is enough to prove that
\begin{equation}\label{5.25}
\limsup_{n\to\infty}Z_2(\mathfrak{s}_n)\ge 1-\delta\quad\text{a.s.}
\end{equation}
Our plan is to show that there exists $\overline{s}>0$ such that, for all $s\in(0,\overline{s})$,
\begin{equation}\label{5.26}
\mathbb{P}\{Z_2(s)>1-\delta\}\ge 3^{-1}\eee^{-(1-\delta^2/8)\log^{(3)}1/s}.
\end{equation}
Pick $n_1\ge n_0$ such that $\mathfrak{s}_n<\overline{s}$ for $n\ge n_1$. Inequality \eqref{5.26} together with $(1+\gamma)(1-\delta^2/8)<1$ secures
\begin{equation*}
\sum_{n\ge n_1}\mathbb{P}\{Z_2(\mathfrak{s}_n)>1-\delta\}\ge 3^{-1}\sum_{n\ge n_1}\frac{1}{n^{(1+\gamma)(1-\delta^2/8)}}=\infty.
\end{equation*}
In view of \eqref{5.24}, the random variables $Z_2(\mathfrak{s}_{n_1})$, $Z_2(\mathfrak{s}_{n_1+1}),\ldots$ are independent. Thus, by the converse part of the Borel-Cantelli lemma, divergence of the latter series entails \eqref{5.25}.

When proving \eqref{5.26} we use the event
\begin{equation*}
U_s:=\big\{1-\delta<Z_2(s)\le 1\big\}=\big\{(1-\delta)V(s)< W(s)/(g(s))^{1/2}\le V(s)\big\},
\end{equation*}
where $V(s)=(2\,\log^{(3)}1/s)^{1/2}$ and
\begin{equation*}
W(s):=\frac{Z_2(s)}{L(s)}=\sum_{N_1(s)< p\leq N_2(s)}\frac{\widetilde{\eta}_{p,1}(s)}{p^{1/2+s}}.
\end{equation*}
For $u\in\mathbb{R}$ and small $s>0$, let $\mathbb{Q}_{s,u}$ be a probability measure on $(\Omega, \mathfrak{F})$ defined by
\begin{equation}
\mathbb{Q}_{s,u}(A)=\frac{\mathbb{E}\big[\eee^{uW(s)/(g(s))^{1/2}}\1_A\big]}{\mathbb{E}[\eee^{uW(s)/(g(s))^{1/2}}]}, \quad A\in\mathfrak{F}.
\end{equation}
Then
\begin{multline}\label{5.27}
\mathbb{E}\big[\eee^{u(W(s)/(g(s))^{1/2}-V(s))}\big]\mathbb{Q}_{s,u}(U_s)=\eee^{-uV(s)}\me\big[\eee^{uW(s)/(g(s))^{1/2}}\1_{U_s}\big]\\\leq \mathbb{P}(U_s)\le\mathbb{P}\{Z_2(s)>1-\delta\}.
\end{multline}
We claim that
\begin{equation}\label{eq:second}
\mathbb{Q}_{s,u}(U_s)\ge 1/3
\end{equation}
and
\begin{equation}\label{eq:first}
\mathbb{E}\big[\eee^{u(W(s)/(g(s))^{1/2}-V(s))}\big]\geq \eee^{-(1-\delta^2/8)\log^{(3)}1/s}
\end{equation}
provided that
\begin{equation}\label{eq:choiceu}
u=u(s)=\sqrt{2}(1-\delta/2)(\log^{(3)}1/s)^{1/2}.
\end{equation}
The argument given in the proof of Lemma 6 in \cite{Iksanov+Kostohryz:2025}, which justifies an inequality analogous to \eqref{eq:second}, applies without any changes. Hence, it is omitted. Now we show that \eqref{eq:first} holds. Inequality \eqref{5.25} is an immediate consequence of \eqref{eq:second} and \eqref{eq:first}.

\noindent {\sc Proof of \eqref{eq:first}.} We first prove that, with $u=O((\log^{(3)}1/s)^{1/2})$,
\begin{equation}\label{5.28}
\mathbb{E}\big[\eee^{uW(s)/(g(s))^{1/2}}\big]~\sim~ \eee^{u^2/2+u^2 h(s)},\quad s\to 0+
\end{equation}
for some function $h$ satisfying $\lim_{s\to 0+}h(s)=0$. Put
\begin{equation*}
\xi_p(s)=\frac{\widetilde{\eta}_{p,1}(s)}{(g(s))^{1/2}p^{1/2+s}},\quad p\in\mathcal{P}, \quad s\in(0, \eee^{-\eee}).
\end{equation*}
As a consequence, for $u\in\mathbb{R}$,
\begin{equation*} 
uW(s)/(g(s))^{1/2}=\sum_{N_1(s)<p\leq N_2(s)}u\xi_p(s).
\end{equation*}
According to the second inequality in \eqref{5.8},
\begin{equation*}
|u\xi_p(s)|=O(1/\log\log 1/s)=o(1),\quad s\to 0+\quad\text{\rm a.s.}
\end{equation*}
for each $p\in\mathcal{P}$. Invoking
\begin{equation*}
\eee^x=1+x+x^2/2+o(x^2)\quad\text{and}\quad\log(1+x)=x+O(x^2),\quad x\to 0,
\end{equation*}
we obtain
\begin{multline}\label{5.30}
\mathbb{E}\big[\eee^{uW(s)/(g(s))^{1/2}}\big]=\prod_{N_1(s)< p\leq N_2(s)}\mathbb{E}[\exp(u\xi_p(s))]\\=\prod_{N_1(s)<p\leq N_2(s)}\mathbb{E}\big[1+u\xi_p(s)+u^2\xi_p^2(s)(1/2+o(1))\big]\\=
\exp\sum_{N_1(s)<p\leq N_2(s)}\log\big(1+u^2\mathbb{E}\big[\xi_p^2(s)(1/2+o(1))\big]\big)\\=
\exp\Big(u^2\big(1/2+o(1)\big)\sum_{N_1(s)<p\leq N_2(s)}\mathbb{E}[\xi_p^2(s)]+u^4O\Big(\sum_{N_1(s)<p\leq N_2(s)}(\mathbb{E}[\xi_p^2(s)])^2\Big)\Big).
\end{multline}
Here, we have used the fact that the $o(1)$ term can be chosen nonrandom. In view of \eqref{5.22} and \eqref{5.23},
\begin{equation*}
\sum_{N_1(s)<p\leq N_2(s)}p^{-1-2s}~\sim~ g(s),\quad s\to 0+.
\end{equation*}
The limit relation
\begin{equation*}
\mathbb{E}[\widetilde{\eta}_{p,1}^2(s)]=\mathbb{E}[\eta_p^2\1_{(A_{p,1}(s))^c}]-\big(\mathbb{E}[\eta_p\1_{(A_{p,1}(s))^c}]\big)^2~\to~ 1,\quad s\to 0+
\end{equation*}
holds uniformly in $p\in[N_1(s)+1,N_2(s)]$, $p\in\mathcal{P}$. A combination of these two facts yields
\begin{equation}\label{5.31}
\sum_{N_1(s)< p\leq N_2(s)}\mathbb{E}[\xi_p^2(s)]=\frac{1}{g(s)}\sum_{N_1(s)<p\leq N_2(s)}\frac{\mathbb{E}[\widetilde{\eta}_{p,1}^2(s)]}{p^{1+2s}}~\to~ 1,\quad s\to 0+.
\end{equation}
Finally,
\begin{multline}\label{5.32}
u^2\sum_{N_1(s)<p\leq N_2(s)}(\mathbb{E}[\xi_p^2(s)])^2=\frac{u^2}{(g(s))^2}\sum_{N_1(s)<p\leq N_2(s)}\frac{(\mathbb{E}[\widetilde{\eta}_{p,1}^2(s)])^2}{p^{2+4s}}\\
\le \frac{u^2}{(g(s))^2}\sum_{p>N_1(s)+1}\frac{1}{p^2}=o(1),\quad s\to 0+.
\end{multline}
Now \eqref{5.28} follows from \eqref{5.30}, \eqref{5.31} and \eqref{5.32}.

Finally, formula \eqref{5.28}, with $u$ as in \eqref{eq:choiceu}, entails
\begin{equation*}
\mathbb{E}[\eee^{u(W(s)/(g(s))^{1/2}-V(s))}]=\eee^{-(1-\delta^2/4)\log^{(3)}1/s+o(\log^{(3)}1/s)}\ge\eee^{-(1-\delta^2/8)\log^{(3)}1/s}
\end{equation*}
for small $s>0$.

The proof of Lemma \ref{lemma5.8} is complete.
\end{proof}
\begin{proof}[Proof of Proposition \ref{pr2}]
To prove \eqref{5.17}, pick sufficiently small $\delta>0$ and $\gamma>0$, and put $\mathfrak{s}_n=\exp(-\exp(n^{1+\gamma}))$ for $n\geq 1$. Using $\mathbb{E}[\eta]=0$, write 
\begin{multline*}
L(\mathfrak{s}_n) \sum_p\frac{\eta_p}{p^{1/2+\mathfrak{s}_n}}=L(\mathfrak{s}_n) \sum_{p\leq \lfloor(\log 1/\mathfrak{s}_n)^{1/2}\rfloor}\frac{\eta_p}{p^{1/2+\mathfrak{s}_n}}\\-L(\mathfrak{s}_n) \sum_{p\leq \lfloor(\log 1/\mathfrak{s}_n)^{1/2}\rfloor}\frac{1}{p^{1/2+\mathfrak{s}_n}}(\eta_p\1_{(A_{p,1}(\mathfrak{s}_n))^c}-\mathbb{E}[\eta_p\1_{(A_{p,1}(\mathfrak{s}_n))^c}])\\+L(\mathfrak{s}_n) \sum_{p\ge\lfloor(\log 1/\mathfrak{s}_n)^{1/2}\rfloor+1}\frac{1}{p^{1/2+\mathfrak{s}_n}}(\eta_p\1_{A_{p,1}(\mathfrak{s}_n)}-\mathbb{E}[\eta_p\1_{A_{p,1}(\mathfrak{s}_n)}])\\+L(\mathfrak{s}_n) \sum_p \frac{1}{p^{1/2+\mathfrak{s}_n}}(\eta_p\1_{(A_{p,1}(\mathfrak{s}_n))^c}-\mathbb{E}[\eta_p\1_{(A_{p,1}(\mathfrak{s}_n))^c}]).
        \end{multline*}
The first term on the right-hand side converges to $0$ a.s.\ as $n\to\infty$ by Lemma \ref{lemma_2}, with $M(s)=\lfloor(\log 1/s)^{1/2}\rfloor$. The second term does so according to formula \eqref{5.19} of Lemma \ref{lemma5.7}, with $N_1(s)=\lfloor(\log 1/s)^{1/2}\rfloor$. Finally, the third term vanishes by Lemma \ref{lemma_3}, again with $M(s)=\lfloor(\log 1/s)^{1/2}\rfloor$. By Lemma\footnote{Just in case, we attract the reader's attention to a somewhat delicate point. Although we apply Lemma \ref{lemma5.7}, with $M(s)=\lfloor(\log 1/s)^{1/2}\rfloor$, we are not supposed to take the same $M$ while proving Lemma \ref{lemma5.8}. Actually, $M$ used in the proof of Lemma \ref{lemma5.8} grows much faster than $s\mapsto \lfloor (\log 1/s)^{1/2}\rfloor$.} \ref{lemma5.8}, $$\limsup_{n\to\infty} L(\mathfrak{s}_n) \sum_p \frac{1}{p^{1/2+\mathfrak{s}_n}}(\eta_p\1_{(A_{p,1}(\mathfrak{s}_n))^c}-\mathbb{E}[\eta_p\1_{(A_{p,1}(\mathfrak{s}_n))^c}])\geq 1-\delta\quad\text{a.s.}$$ and thereupon
\begin{equation*}
\limsup_{s\to0+}L(s) \sum_p \frac{\eta_p}{p^{1/2+s}}\ge\limsup_{n\to\infty}L(\mathfrak{s}_n) \sum_p \frac{\eta_p}{p^{1/2+\mathfrak{s}_n}}\ge 1-\delta\quad\text{\rm a.s.}
\end{equation*}
Letting $\delta\to 0+$ completes the proof of \eqref{5.17}. Relation \eqref{5.18} follows from \eqref{5.17}, with $-\eta_p$ replacing $\eta_p$.
\end{proof}
\begin{proof}[Proof of Theorem \ref{con:lil}]
Relations \eqref{eq:limsup} and \eqref{eq:liminf} are secured by Propositions \ref{pr1} and \ref{pr2}, respectively.

It remains to prove \eqref{eq:limitpoints}. As a preparation, put $H:=\{z\in\mathbb{C}: {\rm Re}\,z>0\}$ and $$X(z):=\sum_p\frac{\eta_p}{p^{1/2+z}},\quad z\in H.$$ The so defined $X$ is a random analytic function. This implies that the random functions $s\to X(s)=\sum_p p^{-1/2-s}\eta_p$ and $s\mapsto (2\sigma^2\log 1/s\, \log^{(3)}1/s )^{-1/2}X(s)$ are a.s.\ continuous on $(0,\infty)$ and $(0,\eee^{-\eee})$, respectively. Now \eqref{eq:limitpoints} follows from \eqref{eq:limsup} and \eqref{eq:liminf} with the help of the intermediate value theorem for continuous functions.
\end{proof}

\section{Proof of Theorem \ref{con:clt}}\label{sect:flt}

We shall prove the result in an equivalent form
$$\Big(\frac{1}{s^{1/2}}\sum_p\frac{\eta_p}{p^{1/2+\exp(-ts)}}\Big)_{t\geq 0}~\Longrightarrow~ (\sigma B(t))_{t\geq 0},\quad s\to+\infty$$ on $C[0,\infty)$. As before, we can and do assume that $\sigma^2=1$.

We use a standard approach, which consists of two steps: (a) proving weak convergence of finite-dimensional distributions; (b) checking tightness.

\noindent (a) If $t=0$, then, for $s>0$, $X(\eee^{-ts})=X(1)=\sum_p p^{-3/2}\eta_p$, and $s^{-1/2}X(1)$ converges in probability to $B(0)=0$ as $s\to +\infty$.

Thus, it suffices to show that, for $t_1, t_2\in (0,\infty)$ (we do not need to consider $t=0$),
\begin{equation}\label{eq:covar}
\me \big[X(\eee^{-t_1s})X(\eee^{-t_2s})\big] ~\sim~ \min(t_1, t_2)s,\quad s\to +\infty
\end{equation}
and check the Lindeberg-Feller condition: for all $\varepsilon>0$ and each fixed $t>0$,
\begin{equation}\label{eq:Lindeberg}
\lim_{s\to +\infty}\frac{1}{s}\sum_p \me\Big[\Big(\frac{\eta_p}{p^{1/2+\exp(-ts)}}\Big)^2\1_{\{|\eta_p|>\varepsilon p^{1/2+\exp(-ts)}s^{1/2}\}}\Big]=0.
\end{equation}
\noindent {\sc Proof of \eqref{eq:covar}.} Using \eqref{eq:var} we obtain
\begin{multline*}
\me \Big[\sum_p \frac{\eta_p}{p^{1/2+\exp(-t_1s)}}\sum_{p_\ast}\frac{\eta_{p_\ast}}{p_\ast^{1/2+\exp(-t_2s)}}\Big]=\sum_p \frac{1}{p^{1+\exp(-t_1s)+\exp(-t_2s)}}\\~\sim~ -\log (\eee^{-t_1s}+\eee^{-t_2s})~\sim~ \min(t_1,t_2)s,\quad s\to +\infty.
\end{multline*}
{\sc Proof of \eqref{eq:Lindeberg}.} For each $p\in\mathcal{P}$, each $s>0$ and each $t>0$, $p^{-1/2-\exp(-ts)}\leq 1$. Hence, the expression under the limit on the left-hand side of \eqref{eq:Lindeberg} does not exceed $$\frac{\me [\eta^2\1_{\{|\eta|>\varepsilon s^{1/2}\}}]}{s}\sum_p\frac{1}{p^{1+2\exp(-ts)}}.$$  As shown in the proof of \eqref{eq:covar}, $\sum_p p^{-1-2\exp(-ts)} \sim ts$ as $s\to +\infty$. Further, $\me [\eta^2]<\infty$ entails $\lim_{s\to +\infty}\me [\eta^2\1_{\{|\eta|>\varepsilon s^{1/2}\}}]=0$. With these at hand, \eqref{eq:Lindeberg} follows.

\noindent (b) We have to prove tightness on $C[0,T]$ (the set of continuous functions defined on $[0,T]$) for each $T>0$. Since $(B(t))_{t\in [0,T]}$ has the same distribution as $T^{1/2}(B(t))_{t\in [0,1]}$, it is enough to investigate the case $T=1$ only.

As in the proof of Lemma \ref{lemma_2}, write, for $t\in [0,1]$ and $s\geq 1$,
\begin{multline*}
\Big|\sum_{p\leq \lfloor s\rfloor}\frac{\eta_p}{p^{1/2+\exp(-ts)}}\Big|=\Big|\frac{T^\ast(\lfloor s\rfloor)}{(\lfloor s\rfloor)^{1/2+\exp(-ts)}}+(1/2+\exp(-ts))\int_1^{\lfloor s 
\rfloor}\frac{T^\ast(x)}{x^{3/2+\exp(-ts)}}{\rm d}x\Big|\\\leq \frac{|T^\ast(\lfloor s\rfloor)|}{(\lfloor s\rfloor)^{1/2}}+ 3/2 \max_{y\leq \lfloor s\rfloor}\,|T^\ast(y)|\int_1^\infty \frac{{\rm d}x}{x^{3/2}}\quad\text{a.s.}
\end{multline*}
Donsker's functional limit theorem (see, for instance, Theorem 14.1 on p.~146 in \cite{Billingsley:1999}) entails $n^{-1/2}\max_{k\leq n}\,|T_k|\dod |\mathcal{N}(0,1)|$ as $n\to\infty$, where $\dod$ denotes convergence in distribution and $\mathcal{N}(0,1)$ denotes a random variable with the normal distribution of mean $0$ and variance $1$. This together with the prime number theorem ensures that
$$s^{-1/2}(\log s)^{1/2} \max_{y\leq \lfloor s\rfloor}\,|T^\ast(y)|=s^{-1/2}(\log s)^{1/2} \max_{k\leq \rho(\lfloor s\rfloor)}\,|T_k|~\dod~ |\mathcal{N}(0,1)|,\quad s\to\infty.$$ As a consequence,
\begin{equation}\label{eq:inter3}
\lim_{s\to +\infty} \frac{1}{s^{1/2}}\sup_{t\in [0,1]}\,\Big|\sum_{p\leq \lfloor s\rfloor}\frac{\eta_p}{p^{1/2+\exp(-ts)}}\Big|=0\quad\text{in probability}.
\end{equation}
Put $a(s):=(\log \lfloor s\rfloor)^{1/2}$ for $s\geq 1$. Arguing as in the proofs of \eqref{lem4.11} and \eqref{lem4.21} we infer
\begin{equation}\label{lem4.1112}
\lim_{s\to+\infty}\sup_{t\in [0,\,1]}\sum_{p\ge \lfloor s \rfloor+1}\frac{1}{p^{1/2+\exp(-ts)}}|\eta_p|\1_{\{|\eta_p|>(p/\log p)^{1/2}a(s)\}}=0\quad \text{\rm{a.s.}}
\end{equation}
and
\begin{equation}\label{lem4.2112}
\lim_{s\to\infty}\sup_{t\in [0,\,1]}\sum_{p\ge \lfloor s\rfloor+1}\frac{1}{p^{1/2+\exp(-ts)}}\mathbb{E}\big[|\eta_p|\1_{\{|\eta_p|>(p/\log p)^{1/2}a(s)\}}\big]=0.
\end{equation}
For instance, for $s\geq 1$, $$\sup_{t\in [0,1]}\sum_{p\ge \lfloor s\rfloor+1}\frac{1}{p^{1/2+\exp(-ts)}}|\eta_p|\1_{\{|\eta_p|>(p/\log p)^{1/2}a(s)\}}\leq \sum_{p\ge \lfloor s\rfloor+1}\frac{1}{p^{1/2}}|\eta_p|\1_{\{|\eta_p|>(p/\log p)^{1/2}a(s)\}}\quad\text{a.s.},$$ and the right-hand side is equal to $0$ for large enough $s$ a.s.

Put
\begin{equation*}
X^\ast(t,s):= \sum_{p\ge \lfloor s\rfloor +1}\frac{\eta^\ast_p(s)}{p^{1/2+\exp(-ts)}},\quad t\geq 0,~s>0,
\end{equation*}
where $\eta^\ast_p(s):=\eta_p\1_{\{|\eta_p|\leq (p/\log p)^{1/2}a(s)\}}-\me \big[\eta_p\1_{\{|\eta_p|\leq (p/\log p)^{1/2}a(s)\}}\big]$ for $p\in\mathcal{P}$ and $s>0$. Relations \eqref{eq:inter3}, \eqref{lem4.1112} and \eqref{lem4.2112} guarantee that we are left with proving tightness of the distributions of $(X^\ast(t,s))_{t\in [0,1]}$ for large $s>0$. By formula (7.8) on p.~82 in \cite{Billingsley:1999}, it is enough to show that, for all $\varepsilon>0$,
\begin{equation}\label{eq:tight}
\lim_{i\to\infty}\limsup_{s\to\infty}\mmp\{\sup_{u,v\in [0,1],\, |u-v|\leq 2^{-i}}\,|X^\ast(u,s)-X^\ast(v,s)|>\varepsilon s^{1/2}\}=0.
\end{equation}
The proof of \eqref{eq:tight} is analogous to the last part of the proof of Lemma \ref{lemma5}. We use dyadic partitions of $[0,1]$ by points $t^\ast_{j,\,m}:=2^{-j}m$ for $j\in\mn_0$ and $m=0,1,\ldots, 2^j$. Similarly to the argument preceding formula \eqref{5.12} we infer
\begin{equation*}
\sup_{u,v\in [0,1],\, |u-v|\leq 2^{-i}}\,|X^\ast(u,s)-X^\ast(v,s)|\leq \sum_{j\ge i}\max_{1\le m \le 2^j}|X^\ast(t^\ast_{j,\,m}, s)-X^\ast(t^\ast_{j,\,m-1},s)|.
\end{equation*}
Thus, it suffices to prove that, for all $\varepsilon>0$,
\begin{equation*}
\lim_{i\to\infty}\limsup_{s\to +\infty}\mathbb{P}\Big\{\sum_{j\ge i}\max_{1\le m \le 2^j}|X^\ast(t^\ast_{j,\,m},s)-X^\ast(t^\ast_{j,\,m-1},s)|>\varepsilon s^{1/2}\Big\}=0.
\end{equation*}
Put $a^\ast_j:=2^{-j/2}j^2$ for $j\in\mathbb{N}_0$. The last limit relation follows if we can show that, for all $\varepsilon>0$,
$$\lim_{i\to\infty}\limsup_{s\to +\infty}\sum_{j\ge i}\mathbb{P}\Big\{\max_{1\le m \le 2^j}|X^\ast(t^\ast_{j,\,m},s)-X^\ast(t^\ast_{j,\,m-1},s)|>\varepsilon a^\ast_j s^{1/2}\Big\}=0.$$
Denote by $A^\ast_j(s)$ and $B^\ast_j(u,p,s)$ the counterparts of $A_j(n)$ and $B_j(u,p,n)$, namely, for small $s>0$, $j\in\mn_0$ and $m\in [0,2^j]$,
$$A^\ast_j(s)=\sum_{p\geq  \lfloor s\rfloor+1}\frac{1}{p}\Big(\frac{1}{p^{\exp(-t^\ast_{j,\,m}s)}}-\frac{1}{p^{\exp(-t^\ast_{j,\,m-1}s)}}\Big)^2 
$$ and
$$B^\ast_j(u,p,s)=\frac{|u|}{p^{1/2}}\Big(\frac{1}{p^{\exp(-t^\ast_{j,\,m}s)}}-\frac{1}{p^{\exp(-t^\ast_{j,\,m-1}s)}}\Big)|\eta^\ast_p(s)|,\quad p\geq \lfloor s\rfloor+1.$$
Then $A^\ast_j(s)\leq C2^{-j}s$ and, if $2^j\geq s$, $B^\ast_j(u,p,s)\leq |u|2^{1-j}s=:C^\ast_j(u,s)$, if $2^j\leq s$, $B^\ast_j(u,p,s)\leq 2|u|=:C^\ast_j(u,s)$. With these at hand we obtain
\begin{equation*}
\me\big[\exp\big(\pm u(X^\ast(t^\ast_{j,\,m},s)-X^\ast(t^\ast_{j,\,m-1},s))\big)\big]\leq \exp\Big(\frac{C2^{-j}su^2}{2}\eee^{C^\ast_j(u,s)}\Big),\quad u\in\mathbb{R}
\end{equation*}
and thereupon
\begin{multline*}
\mathbb{P}\big\{|X^\ast(t^\ast_{j,\,m},s)-X^\ast(t^\ast_{j,\,m-1},s)|>\varepsilon a^\ast_j s^{1/2} \big\}\\ \leq \exp(-u \varepsilon a^\ast_j s^{1/2})\mathbb{E}[\exp(u|X^\ast(t^\ast_{j,\,m},s)-X^\ast(t^\ast_{j,\,m-1},s)|)]\\\leq  2\exp\Big(-u\varepsilon 2^{-j/2}j^2s^{1/2}+\frac{C2^{-j}su^2}{2}\eee^{C^\ast_j(u,s)}\Big).
\end{multline*}
Putting $u=\varepsilon 2^{j/2}s^{-1/2}$ we conclude that $C^\ast_j(u,s)\leq 2\varepsilon$ and further
\begin{multline*}
\sum_{j\ge i}\mathbb{P}\Big\{\max_{1\le m \le 2^j}|X^\ast(t^\ast_{j,\,m},s)-X^\ast(t^\ast_{j,\,m-1},s)|>\varepsilon a^\ast_j s^{1/2}\Big\}\\ \le 2\exp(C\varepsilon^2\eee^{2\varepsilon}/2)\sum_{j\ge i}2^j \eee^{-\varepsilon^2 j^2}~\to~ 0,\quad i\to\infty.
\end{multline*}
The proof of Theorem \ref{con:clt} is complete.

\section{Proofs of Corollaries \ref{cor:clt} and \ref{cor:lil}}\label{sect:cor}

Put $R(s):=\sum_p\sum_{k\geq 3}\frac{(-1)^{k+1}(f(p))^k}{kp^{k(1/2+s)}}$ for $s>0$. In the proof of Proposition 3.2 in \cite{Geis+Hiary:2025} (see also Lemma 2.4 in \cite{Aymone+Heap+Zhao:2023}) it is shown that $$\log \sum_{n\geq 1}\frac{f(n)}{n^{1/2+s}}+\frac{\log \zeta(1+2s)}{2}=\sum_p \frac{f(p)}{p^{1/2+s}}+R(s)+O(1)=\sum_p \frac{f(p)}{p^{1/2+s}}+ O(1),\quad s\to 0+\quad\text{a.s.}$$ Corollary \ref{cor:lil} follows from this relation and Theorem \ref{con:lil}.

Corollary \ref{cor:clt} follows from Theorem \ref{con:clt} and a relation that we are now going to prove: for all $T>0$, $$\lim_{s\to 0+}\frac{\sup_{t\in [0,\,T]}\,|R(s^t)|}{(\log 1/s)^{1/2}}=0\quad\text{in probability.}$$ Indeed, for $s\in (0,1)$ and $t\in [0,T]$, $$|R(s^t)|\leq \sum_p\sum_{k\geq 3}\frac{1}{kp^{k(1/2+s^t)}}\leq \sum_p \frac{1}{p^{3(1/2+s^t)}}\frac{p^{1/2+s^t}}{p^{1/2+s^t}-1}\leq \frac{2^{1/2}}{2^{1/2}-1}\sum_p \frac{1}{p^{3(1/2+s^T)}}.$$ The right-hand side converges to $\frac{2^{1/2}}{2^{1/2}-1}\sum_p \frac{1}{p^{3/2}}<\infty$ as $s\to 0+$. This justifies the claimed limit relation.

\section{Proof of Corollary \ref{cor:cltlil2}}\label{sect:cor3}

Starting with an Euler product representation $$\sum_{n\geq 1}\frac{f(n)}{n^{1/2+s}}=\prod_p \Big(1+\frac{f(p)}{p^{1/2+s}}+\frac{(f(p))^2}{p^{1+2s}}+\ldots+\frac{(f(p))^{k-1}}{p^{(1/2+s)(k-1)}}\Big),$$ write 
\begin{equation*}
\log\sum_{n\geq 1}\frac{f(n)}{n^{1/2+s}}= 
:\sum_p \frac{f(p)}{p^{1/2+s}}+\frac{1}{2}\sum_p \frac{1}{p^{1+2s}}+R^\ast(s). 
\end{equation*}
The fact that, for all $T>0$, $$\lim_{s\to 0+}\frac{\sup_{t\in [0,\,T]}\,|R^\ast(s^t)|}{(\log 1/s)^{1/2}}=0\quad\text{in probability}$$ follows along the lines of the previous proof.
With this at hand, invoking 
$$\log \sum_{n\geq 1}\frac{f(n)}{n^{1/2+s}}-\frac{\log \zeta(1+2s)}{2}=\sum_p \frac{f(p)}{p^{1/2+s}}+R^\ast(s)+O(1),\quad s\to 0+$$ completes the proof. 

\noindent \textbf{Acknowledgment.} A. Iksanov thanks Oleksiy Klurman for a useful correspondence and providing several relevant references. The authors are grateful to Paul Bourgade for his comment outlined in the paragraph following Theorem \ref{con:clt}.

\section{Statements \& Declarations}

\noindent \textbf{Funding.} This work was supported by the MOHRSS program (H 20240850), which is gratefully acknowledged.

\bigskip

\noindent \textbf{Competing Interests.} The authors have no relevant financial or non-financial interests to disclose.

\bigskip

\noindent \textbf{Data Availability.} Our manuscript has no associated data.

\bigskip

\noindent \textbf{Author Contributions.} Both authors contributed to the derivation of the results. Both authors contributed to the manuscript preparation, and both authors read and approved the final manuscript.


\begin{thebibliography}{99}	

\bibitem{Aymone+Frometa+Misturini:2020} M.~Aymone, S.~Frometa and R.~ Misturini, \textit{Law of the iterated logarithm for a random Dirichlet series}. Electron. Commun. Probab. \textbf{25} (2020), article no.~56, 14 pp.


\bibitem{Aymone+Heap+Zhao:2023} M.~Aymone, W.~Heap and J.~Zhao, \textit{Sign changes of the partial sums of a random multiplicative function}. Bull. London Math. Soc. \textbf{55} (2023), 78--89.

\bibitem{Billingsley:1999} P.~Billingsley, \textit{Convergence of probability measures}. Second edition, Wiley, 1999.

\bibitem{Buraczewskietal:2023} D.~Buraczewski, C.~Dong, A.~Iksanov and A.~Marynych, \textit{Limit theorems for random {D}irichlet series}. Stoch. Proc. Appl. \textbf{165} (2023), 246--274.

\bibitem{Caich:2025+} R.~Caich, \textit{Almost sure upper bound for random multiplicative functions}. Preprint (2023) available at {\tt https://arxiv.org/abs/2304.00943v2}

\bibitem{Geis+Hiary:2025} N.~Geis and G.~Hiary, \textit{Counting sign changes of partial sums of random multiplicative functions}. Quarterly J. Math. \textbf{76} (2025), 19--45.

\bibitem{Gorodetsky+Wong:2025} O.~Gorodetsky and M.D.~Wong, \textit{On the limiting distribution of sums of random multiplicative functions}. Preprint (2025) available at {\tt https://arxiv.org/abs/2508.12956}

\bibitem{Granville+Soundararajan:2003} A.~ Granville and K.~ Soundararajan, \textit{The distribution of values of $L(1, \chi_d)$}. Geom. Funct. Analysis. \textbf{13} (2003), 992--1028.

\bibitem{Hardy:2025} S.~Hardy, \textit{The distribution of partial sums of random multiplicative functions with a large prime factor}. Preprint (2025) available at {\tt https://arxiv.org/pdf/2503.06256}

\bibitem{Harper:2013} A. J.~Harper, \textit{On the limit distributions of some sums of a random multiplicative function}. J. reine angew. Math. \textbf{678} (2013), 95--124.

\bibitem{Harper:2023} A.J.~Harper, \textit{Almost sure large fluctuations of random multiplicative functions}. Int. Math. Res. Not. \textbf{2023} (2023), 2095--2138.

\bibitem{Iksanov+Kostohryz:2025} A.~Iksanov and R.~Kostohryz, \textit{Limit theorems for random Dirichlet series: boundary case}. Mod. Stoch. Theor. Appl. \textbf{12} (2025), 347--373.

\bibitem{Klurman+Lamzouri+Munsch:2025+} O.~Klurman, Y. Lamzouri and M.~Munsch, \textit{Sign changes of short character sums and real zeros of Fekete polynomials}. Preprint (2025) available at {\tt https://arxiv.org/pdf/2403.02195}

\bibitem{Stout:1974} W. F.~Stout, \textit{Almost sure convergence}. Academic Press, 1974.

\bibitem{Vassaux:2025} L.~Vassaux, \textit{Brownian behaviour of the Riemann zeta function around the critical line}. Preprint (2025) available at {\tt https://arxiv.org/pdf/2505.07352}

\bibitem{Wintner:1944} A.~Wintner, \textit{Random factorizations and Riemann's hypothesis}. Duke Math. J. \textbf{11} (1944), 267--275.

\bibitem{Zhao+Huang:2024} H.~Zhao and Y.~ Huang, \textit{Random Dirichlet series with $\alpha$-stable coefficients}. Statist. Probab. Letters. \textbf{208} (2024), 110047.

\end{thebibliography}
\end{document}